\newtheorem{thm}{Theorem}[section]
\newtheorem{lem}[thm]{Lemma}
\newtheorem{rem}[thm]{Remark}
\theoremstyle{definition}
\newcommand{\scr}[1]{\mathscr #1}
\definecolor{wco}{rgb}{0.5,0.2,0.3}
\numberwithin{equation}{section} \theoremstyle{remark}
\newcommand{\ua}{\uparrow}
\title{{\bf
Long Time Entropy-Cost type Propagation of Chaos}\footnote{Supported in
 part by  National Key R\&D Program of China (No. 2022YFA1006000) and NNSFC (12271398).} }
\author{
{\bf   Xing Huang  }\\
\footnotesize{ Center for Applied Mathematics, Tianjin
University, Tianjin 300072, China}\\
\footnotesize{  xinghuang@tju.edu.cn}}
\begin{document}
\allowdisplaybreaks
\def\R{\mathbb R}  \def\ff{\frac} \def\ss{\sqrt} \def\B{\mathbf
B} \def\W{\mathbb W}
\def\N{\mathbb N} \def\kk{\kappa} \def\m{{\bf m}}
\def\ee{\varepsilon}\def\ddd{D^*}
\def\dd{\delta} \def\DD{\Delta} \def\vv{\varepsilon} \def\rr{\rho}
\def\<{\langle} \def\>{\rangle} \def\GG{\Gamma} \def\gg{\gamma}
  \def\nn{\nabla} \def\pp{\partial} \def\E{\mathbb E}
\def\d{\text{\rm{d}}} \def\bb{\beta} \def\aa{\alpha} \def\D{\scr D}
  \def\si{\sigma} \def\ess{\text{\rm{ess}}}
\def\beg{\begin} \def\beq{\begin{equation}}  \def\F{\scr F}
\def\Ric{\text{\rm{Ric}}} \def\Hess{\text{\rm{Hess}}}
\def\e{\text{\rm{e}}} \def\ua{\underline a} \def\OO{\Omega}  \def\oo{\omega}
 \def\tt{\tilde} \def\Ric{\text{\rm{Ric}}}
\def\cut{\text{\rm{cut}}} \def\P{\mathbb P} \def\ifn{I_n(f^{\bigotimes n})}
\def\C{\scr C}      \def\aaa{\mathbf{r}}     \def\r{r}
\def\gap{\text{\rm{gap}}} \def\prr{\pi_{{\bf m},\varrho}}  \def\r{\mathbf r}
\def\Z{\mathbb Z} \def\vrr{\varrho}
\def\L{\scr L}\def\Tt{\tt} \def\TT{\tt}\def\II{\mathbb I}
\def\i{{\rm in}}\def\Sect{{\rm Sect}}  \def\H{\mathbb H}
\def\M{\scr M}\def\Q{\mathbb Q} \def\texto{\text{o}}
\def\Rank{{\rm Rank}} \def\B{\scr B} \def\i{{\rm i}} \def\HR{\hat{\R}^d}
\def\to{\rightarrow}\def\l{\ell}\def\iint{\int}
\def\EE{\scr E}\def\Cut{{\rm Cut}}
\def\A{\scr A} \def\Lip{{\rm Lip}}
\def\BB{\scr B}\def\Ent{{\rm Ent}}\def\L{\scr L}
\def\R{\mathbb R}  \def\ff{\frac} \def\ss{\sqrt} \def\B{\mathbf
B}
\def\N{\mathbb N} \def\kk{\kappa} \def\m{{\bf m}}
\def\dd{\delta} \def\DD{\Delta} \def\vv{\varepsilon} \def\rr{\rho}
\def\<{\langle} \def\>{\rangle} \def\GG{\Gamma} \def\gg{\gamma}
  \def\nn{\nabla} \def\pp{\partial} \def\E{\mathbb E}
\def\d{\text{\rm{d}}} \def\bb{\beta} \def\aa{\alpha} \def\D{\scr D}
  \def\si{\sigma} \def\ess{\text{\rm{ess}}}
\def\beg{\begin} \def\beq{\begin{equation}}  \def\F{\scr F}
\def\Ric{\text{\rm{Ric}}} \def\Hess{\text{\rm{Hess}}}
\def\e{\text{\rm{e}}} \def\ua{\underline a} \def\OO{\Omega}  \def\oo{\omega}
 \def\tt{\tilde} \def\Ric{\text{\rm{Ric}}}
\def\cut{\text{\rm{cut}}} \def\P{\mathbb P} \def\ifn{I_n(f^{\bigotimes n})}
\def\C{\scr C}      \def\aaa{\mathbf{r}}     \def\r{r}
\def\gap{\text{\rm{gap}}} \def\prr{\pi_{{\bf m},\varrho}}  \def\r{\mathbf r}
\def\Z{\mathbb Z} \def\vrr{\varrho}
\def\L{\scr L}\def\Tt{\tt} \def\TT{\tt}\def\II{\mathbb I}
\def\i{{\rm in}}\def\Sect{{\rm Sect}}  \def\H{\mathbb H}
\def\M{\scr M}\def\Q{\mathbb Q} \def\texto{\text{o}} \def\LL{\Lambda}
\def\Rank{{\rm Rank}} \def\B{\scr B} \def\i{{\rm i}} \def\HR{\hat{\R}^d}
\def\to{\rightarrow}\def\l{\ell}
\def\8{\infty}\def\I{1}\def\U{\scr U} \def\n{{\mathbf n}}
\maketitle

\begin{abstract} Due to the regularization effect of the stochastic noise, the quantitative entropy-cost type propagation of chaos for mean field interacting particle system is proposed. The result shows that the Kac's chaotic property measured in relative entropy at any positive time can only depend on the weaker initial one measured in $L^2$-Wasserstein distance. Moreover, under dissipative assumption, the long time entropy-cost type propagation of chaos can also be captured. The results are also available in path dependent case, where the log-Sobolev inequality for McKean-Vlasov SDEs does not hold.
 \end{abstract}

\noindent
 AMS subject Classification:\  60H10, 60G44.   \\
\noindent
 Keywords: Mean field interacting particle system, Wasserstein distance, McKean-Vlasov SDEs, Entropy-cost type propagation of chaos, Path dependent SDEs.
 \vskip 2cm
\section{Introduction}
In the mean field interacting particle system, where the coefficients depend on the empirical distribution of the particles, as the number of particles goes to infinity, the limit equation of a single particle is distribution dependent stochastic differential equation(SDE), which is also called McKean-Vlasov SDE in the literature due to the work in \cite{McKean}. This limit phenomenon is related to propagation of chaos, which can be viewed as a dynamical version of Kac's chaos, see for instance \cite{McKean67,SZ}. Recall that a family of Polish space $E$-valued exchangeable random variables $\{Y^{i,N}\}_{1\leq i\leq N}$ is called $\mu$-chaotic for some $\mu\in\scr P(E)$, the space  of all  probability measures on $E$ equipped with the weak topology, if one of the following equivalent statements holds:
\begin{enumerate}
\item[(i)] The $\scr P(E)$-valued random variable $\frac{1}{N}\sum_{i=1}^N\delta_{Y^{i,N}}$ converges to $\mu$ weakly as $N\to\infty$.
\item[(ii)] The joint distribution of $(Y^{1,N},Y^{2,N})$ converges to $\mu^{\otimes2}$ weakly as $N\to\infty$.
\item[(iii)] For any $k\geq 2$, the joint distribution of $(Y^{1,N},Y^{2,N},\cdots,Y^{k,N})$ converges to $\mu^{\otimes k}$ weakly as $N\to\infty$,
\end{enumerate}
here $\mu^{\otimes k}$ denote the $k$ independent product of $\mu$, i.e. $\mu^{\otimes k}=\prod_{i=1}^k\mu$.

Let $(E,\rho)$ be a Polish space and $o$ be a fixed point in $E$. Let
$$\scr P_2(E):=\big\{\mu\in \scr P(E): \mu(\rho(o,\cdot)^2)<\infty\big\},$$
which is a Polish space under the $L^2$-Wasserstein distance
$$\W_2(\mu,\nu)= \inf_{\pi\in \mathbf{C}(\mu,\nu)} \bigg(\int_{E\times E} \rho(x,y)^2 \pi(\d x,\d y)\bigg)^{\ff 1 {2}},\ \  \mu,\nu\in \scr P_2(E), $$ where $\mathbf{C}(\mu,\nu)$ is the set of all couplings of $\mu$ and $\nu$.

Let $\{W_t\}_{t\geq 0}$ be an $n$-dimensional Brownian motion on some complete filtration probability space $(\Omega, \scr F, (\scr F_t)_{t\geq 0},\P)$. Consider McKean-Vlasov SDEs:
\begin{align*}\d  X_t=b_t(X_t,\L_{X_t})\mathrm{d} t+\sigma_t(X_t,\L_{X_t})\mathrm{d} W_t,
\end{align*}
where $\L_{X_t}$ is the distribution of $X_t$, $b:[0,\infty)\times \R^d\times\scr P(\R^d)\to\R^d$, $\sigma:[0,\infty)\times \R^d\times\scr P(\R^d)\to\R^d\otimes\R^{n}$ are measurable and bounded on bounded set.

Let $X_0$ be an $\F_0$-measurable random variable,
$N\ge1$ be an integer and $(X_0^i,W^i_t)_{1\le i\le N}$ be i.i.d.\,copies of $(X_0,W_t).$ Consider independent McKean-Vlasov SDEs:
\begin{align*}\d X_t^i= b_t(X_t^i, \L_{X_t^i})\d t+  \sigma_t(X^i_t,\L_{X_t^i}) \d W^i_t,\ \ 1\leq i\leq N,
\end{align*}
and the mean field interacting particle system
\begin{align}\label{GPS00}\d X^{i,N}_t=b_t(X_t^{i,N}, \hat\mu_t^N)\d t+\sigma_t(X^{i,N}_t, \hat\mu_t^N) \d W^i_t,\ \ 1\leq i\leq N,
\end{align}
where $\hat\mu_t^N$ is the empirical distribution of $(X_t^{i,N})_{1\leq i\leq N}$, i.e.
\begin{align*}
 \hat\mu_t^N =\ff{1}{N}\sum_{j=1}^N\dd_{X_t^{j,N}}.
 \end{align*}
Throughout the paper, we assume that the distribution of the initial values $(X_0^{i,N})_{1\leq i\leq N}$ of the mean field interacting system \eqref{GPS00} is exchangeable.

When $\sigma=I_{d\times d}$ and the interaction is singular, the entropy method is introduced in \cite{BJW,JW,JW1} to derive the quantitative propagation of chaos in relative entropy. More precisely, they deduce
\begin{align}\label{enn}\nonumber&\mathrm{Ent}(\L_{(X_{t}^{1,N},X_{t}^{2,N},\cdots, X_{t}^{k,N})}|\L_{(X_{t}^{1},X_{t}^{2},\cdots,X_{t}^{k})})\\
&\leq \frac{k\mathrm{Ent}(\L_{(X_0^{1,N},X_0^{2,N},\cdots,X_0^{N,N})}|\L_{(X_0^1,X_0^2,\cdots, X_0^N)})}{N}+\frac{C(t)k}{N}, \ \ 1\leq k\leq N, t\in[0,T]
\end{align}
for some constant $C(t)\geq 0$, where the relative entropy of two probability measures is defined as
$$\mathrm{Ent}(\nu|\mu)=\left\{
  \begin{array}{ll}
    \nu(\log(\frac{\d \nu}{\d \mu})), & \hbox{$\nu\ll\mu$;} \\
    \infty, & \hbox{otherwise.}
  \end{array}
\right.$$
They firstly derive global estimate $\mathrm{Ent}(\L_{(X_{t}^{1,N},X_{t}^{2,N},\cdots, X_{t}^{N,N})}|\L_{(X_{t}^{1},X_{t}^{2},\cdots,X_{t}^{N})})$ from the Fokker-Planck equations and then obtain local estimate $\mathrm{Ent}(\L_{(X_{t}^{1,N},X_{t}^{2,N},\cdots, X_{t}^{k,N})}|\L_{(X_{t}^{1},X_{t}^{2},\cdots,X_{t}^{k})})$ for $1\leq k\leq N$ by the tensor property (also called sub-additivity) of relative entropy.

Recently, by directly deducing local estimate $\mathrm{Ent}(\L_{(X_{t}^{1,N},X_{t}^{2,N},\cdots, X_{t}^{k,N})}|\L_{(X_{t}^{1},X_{t}^{2},\cdots,X_{t}^{k})})$ for $1\leq k\leq N$ and combining with the so called BBGKY hierarchy, the author in \cite{L21} obtained the sharp rate $\frac{k^2}{N^2}$ of propagation of chaos in relative entropy for some models such as bounded or uniformly continuous
interaction.

Let us return to \eqref{enn}. To derive the propagation of chaos, the initial relative entropy $\mathrm{Ent}(\L_{(X_0^{1,N},X_0^{2,N},\cdots,X_0^{N,N})}|\L_{(X_0^1,X_0^2,\cdots, X_0^N)})$ is required to be finite, which excludes the example that $X_0^i=x\in\R^d, 1\leq i\leq N$ and $X_0^{i,N}=x+\frac{1}{\sqrt{N}}, 1\leq i\leq N$, where
$\mathrm{Ent}(\L_{(X_0^{1,N},X_0^{2,N},\cdots,X_0^{N,N})}|\L_{(X_0^1,X_0^2,\cdots, X_0^N)})$ is infinite, which means that the initial Kac's chaotic property in relative entropy dose not hold. However, in this example, it is easy to see that
$\W_2(\L_{(X_0^{1,N},X_0^{2,N},\cdots,X_0^{N,N})}, \L_{(X_0^1,X_0^2,\cdots, X_0^N)})=1$.

Let $P_t^\ast\mu_0$ be the distribution of $X_t^1$ with initial distribution $\mu_0\in\scr P_2(\R^d)$. Recall in \cite{W18}, the author has derived the log-Harnack inequality
\begin{align}\label{login}\nonumber&(P_t^\ast \mu_0)(\log f)\\
&\leq\log (P_t^\ast \nu_0)(f)+\frac{c}{t}\W_2(\mu_0,\nu_0)^2,\ \ \mu_0,\nu_0\in\scr P_2(\R^d), f>0, f\in\scr B_b(\R^d), t\in(0,T]
\end{align}
for some constant $c>0$.
Note that \eqref{login} is equivalent to the entropy-cost estimate
\begin{align*}\mathrm{Ent}(P_t^\ast \mu_0|P_t^\ast \nu_0)\leq\frac{c}{t}\W_2(\mu_0,\nu_0)^2,\ \ \mu_0,\nu_0\in\scr P_2(\R^d), t\in(0,T],
\end{align*}
see for instance \cite[Theorem 1.4.2(1)]{Wbook}.
This motivates us to propose a new type of propagation of chaos (called entropy-cost type propagation of chaos), which can be formulated as
\begin{align}\label{pocf}
\nonumber&\mathrm{Ent}(\L_{(X_{t}^{1,N},X_{t}^{2,N},\cdots, X_{t}^{k,N})}|\L_{(X_{t}^{1},X_{t}^{2},\cdots,X_{t}^{k})})\\
&\leq C_0k\psi(N)\Phi(t)\W_2(\L_{(X_0^{i,N})_{1\leq i\leq N}},\L_{(X_0^i)_{1\leq i\leq N}})^2+ C_0k\psi(N), \ \ 1\leq k\leq N, t\in(0,T]
\end{align}
for some constant $C_0\geq 0$ depending on $T$, some decreasing function $\psi:\mathbb{N}^+\to[0,\infty)$ with $\lim_{N\to\infty}\psi(N)=0$, some measurable function $\Phi:(0,\infty)\to(0,\infty)$ with $\lim_{t\to0}\Phi(t)=\infty$.

The uniform in time propagation of chaos also attracts much attention since it is related to generation of chaos, see for instance \cite{Luk,RS}. Generation of chaos means that the larger the time goes, the more chaotic the mean field interacting particle system becomes even if Kac's chaotic property does not hold in the initial time. By using the method of asymptotic reflecting coupling, \cite{DEGZ,GBMEJP} obtain the uniform in time propagation of chaos in Wasserstein distance, see also \cite[Theorem 2.11]{LWZ} for a more explicit and neat result in $L^1$-Wasserstein distance. The uniform in time propagation of chaos in relative entropy is derived in \cite{GBM,M,MRW} by using the uniform in time log-Sobolev inequality for $\L_{X_t^1}$. Combining this with the BBGKY hierarchy, the sharp rate $\frac{k^2}{N^2}$ of uniform in time propagation of chaos in relative entropy is achieved in \cite{LL}. In \cite{RS}, the authors adopt the modulated log-Sobolev inequality to derive the generation of chaos in the sense of modulated free energy and the result is applied in one dimensional repulsive Riesz interaction case with uniformly convex confinement.

The long time entropy-cost type propagation of chaos is given by
\begin{align}\label{lpoc}\nonumber&\mathrm{Ent}(\L_{(X_{t}^{i,N})_{1\leq i\leq k}}|\L_{(X_{t}^{i})_{1\leq i\leq k}})\\
&\leq ck \psi(N)\phi(t)\W_2(\L_{(X_{0}^{i,N})_{1\leq i\leq N}},\L_{(X_{0}^{i})_{1\leq i\leq N}})^2+ck\psi(N),\ \ t>0, 1\leq k\leq N
\end{align}
with some constant $c$ independent of $t$, some decreasing function $\psi:\mathbb{N}^+\to[0,\infty)$ with $\lim_{N\to\infty}\psi(N)=0$, and some measurable function $\phi:(0,\infty)\to(0,\infty)$ with $\lim_{t\to\infty}\phi(t)=0$ and $\lim_{t\to0}\phi(t)=\infty$. Compared with \eqref{pocf}, \eqref{lpoc} can deduce Kac's chaotic property at infinite time, i.e. Kac's chaotic property for ergodic invariant probability measure.
We will establish \eqref{lpoc} in several degenerate models: path dependent and kinetic mean field interacting system with dissipative drifts, where the trick of uniform in time log-Sobolev inequality for $\L_{X_t^1}$ aforementioned is unavailable. In fact, these two models are degenerate ones, where the log-Sobolev inequality does not hold in general. Instead of the entropy method in \cite{BJW,JW,JW1}, we will adopt coupling by change of measure and the dimension free Harnack inequality  as well as an entropy inequality in \cite[Lemma 2.1]{23RW}.



The remaining of the paper is organized as follows: In section 2, a general result on the long time quantitative entropy-cost type  propagation of chaos is offered. The results are applied for path independent case with multiplicative noise in Section 3 and for some degenerate models with additive noise including path dependent and kinetic system in Section 4.

\section{A general result}

In this part, we give a general result of the long time entropy-cost type propagation of chaos by using an entropy-cost estimate on finite time along with uniform in time propagation of chaos in $L^2$-Wasserstein distance.

 Let $(E,\rho)$ be a Polish space. Recall that $\scr P(E)$ is the space of all probability measures on $E$ equipped with the weak topology. For any $k\geq 1$,
define the associated Wasserstein distance on $\scr P_2(E^k)$ :
 $$\W_2(\mu,\nu):= \inf_{\pi\in \mathbf{C}(\mu,\nu)} \bigg(\int_{E^k\times E^k} \sum_{i=1}^k\rho(\xi^i,\tilde{\xi}^i)^2\pi(\d \xi,\d \tilde{\xi})\bigg)^{\ff 1 {2}},\ \ \mu,\nu\in \scr P_{2}(E^k),$$
 where $$\xi=(\xi^1,\xi^2,\cdots,\xi^k), \tilde{\xi}=(\tilde{\xi}^1,\tilde{\xi}^2,\cdots,\tilde{\xi}^k)\in E^k. $$
 For any $N\geq 1$, $(\mathbf{X}_t^{i,N})_{1\leq i\leq N}$  is an $E^N$-valued time homogeneous and continuous Markov process with $\L_{\mathbf{X}_t^{i,N}}\in\scr P_2(E)$ and the distribution of $(\mathbf{X}_t^{i,N})_{1\leq i\leq N}$ is exchangeable.  $\{\mathbf{P}^\ast_t\}_{t\geq 0}$ is a family of mappings on $\scr P_2(E)$ which satisfies
\begin{align}\label{SEM}\mathbf{P}_0^\ast\mu=\mu,\ \ \mathbf{P}_s^\ast\mathbf{ P}_t^\ast\mu=\mathbf{P}_{s+t}^\ast \mu,\ \ \mu\in\scr P_2(E).
\end{align}
Recall that for any $\mu\in\scr P(E)$, $k\geq 2$, $\mu^{\otimes k}=\prod_{i=1}^k\mu$ stands for the $k$-independent product of $\mu$.

\begin{thm}\label{GRS} Assume that there exist constants $t_0>r_0\geq 0, c>0$ and continuous functions $g:[0,\infty)\to(0,\infty), h:(r_0,t_0]\to(0,\infty)$ with $\lim_{t\to\infty} g(t)=0, \lim_{t\to r_0} h(t)=\infty$ such that for any $N\geq 1$ and $\mu\in\scr P_2(E)$,
\begin{align}\label{EWE}\nonumber&\mathrm{Ent}(\L_{(\mathbf{X}_{t}^{i,N})_{1\leq i\leq N}}|(\mathbf{P}_{t}^\ast\mu)^{\otimes N})\\
&\leq h(t)\W_2(\L_{(\mathbf{X}_{0}^{i,N})_{1\leq i\leq N}},\mu^{\otimes N})^2+c(1+\mu(\rho(o,\cdot)^2)),\ \ t\in(r_0,t_0],
\end{align}
\begin{align}\label{EWE13}
&\W_2(\L_{(\mathbf{X}_{t}^{i,N})_{1\leq i\leq N}},(\mathbf{P}_{t}^\ast\mu)^{\otimes N})^2\\
\nonumber&\leq g(t)\W_2(\L_{(\mathbf{X}_{0}^{i,N})_{1\leq i\leq N}},\mu^{\otimes N})^2+c(1+\mu(\rho(o,\cdot)^2)), \ \ t\geq 0,
\end{align}
and
\begin{align}\label{uem}\sup_{t\geq 0}(\mathbf{P}_{t}^\ast\mu)(1+\rho(o,\cdot)^2)\leq c(1+\mu(\rho(o,\cdot)^2)).
\end{align}
Then the following assertions hold.
\begin{enumerate}
\item[(i)] There exists a constant $\tilde{c}>0$ such that for any $1\leq k\leq N<\infty$ and $\mu\in\scr P_2(E)$, it holds
\begin{align*}\mathrm{Ent}(\L_{(\mathbf{X}_{t}^{i,N})_{1\leq i\leq k}}|(\mathbf{P}_{t}^\ast\mu)^{\otimes k})
&\leq \tilde{c}\frac{k}{N}g((t-t_0)\vee 0)h(t\wedge t_0)\W_2(\L_{(\mathbf{X}_{0}^{i,N})_{1\leq i\leq N}},\mu^{\otimes N})^2\\
&+\tilde{c}(1+\mu(\rho(o,\cdot)^2))\frac{k}{N}, \ \ t>r_0.
\end{align*}
\item[(ii)] If $\mathbf{P}_t^\ast$ has a unique invariant probability measure $\mu^\ast\in\scr P_2(E)$, then for $1\leq k\leq N<\infty$, it holds
\begin{align*}\mathrm{Ent}(\L_{(\mathbf{X}_{t}^{i,N})_{1\leq i\leq k}}|(\mu^\ast)^{\otimes k})
&\leq \tilde{c}\frac{k}{N}g((t-t_0)\vee 0)h(t\wedge t_0)\W_2(\L_{(\mathbf{X}_{0}^{i,N})_{1\leq i\leq N}},(\mu^\ast)^{\otimes N})^2\\
&+\tilde{c}(1+\mu^\ast(\rho(o,\cdot)^2))\frac{k}{N},\ \ t>r_0.
\end{align*}
\item[(iii)] If in addition, for any $N\geq 1$, there exists $\bar{\mu}^N\in \scr P_2(E^N)$ such that
$$\lim_{t\to \infty}\W_2(\L_{(\mathbf{X}_{t}^{i,N})_{1\leq i\leq N}},\bar{\mu}^N)=0,$$
then for $1\leq k\leq N<\infty$, we have
\begin{align*}\mathrm{Ent}(\bar{\mu}^N\circ \pi_k^{-1}|(\mu^\ast)^{\otimes k})\leq \tilde{c}(1+\mu^\ast(\rho(o,\cdot)^2))\frac{k}{N},
\end{align*}
where $\pi_k$ is the projection mapping form $E^N$ to $E^k$.
\end{enumerate}
\end{thm}
\begin{proof}
(i)
By the Markovian  property of $(X_t^{i,N})_{1\leq i\leq N}$, \eqref{SEM}-\eqref{uem}, we derive for any $t\geq t_0$ and $\mu\in\scr P_2(E)$,
\begin{align*}&\mathrm{Ent}(\L_{(\mathbf{X}_{t}^{i,N})_{1\leq i\leq N}}|(\mathbf{P}_{t}^\ast\mu)^{\otimes N})\\
&\leq h(t_0)\W_2(\L_{(\mathbf{X}_{t-t_0}^{i,N})_{1\leq i\leq N}},(\mathbf{P}_{t-t_0}^\ast\mu)^{\otimes N})^2 +c(1+(\mathbf{P}_{t-t_0}^\ast\mu)(\rho(o,\cdot)^2))\\ &\leq h(t_0)g(t-t_0)\W_2(\L_{(\mathbf{X}_{0}^{i,N})_{1\leq i\leq N}},\mu^{\otimes N})^2+(c^2+c)(1+\mu(\rho(o,\cdot)^2)).
\end{align*}
Combining this with \eqref{EWE}, the sub-additivity (also called tensor property) of relative entropy
$$\mathrm{Ent}(\L_{(\mathbf{X}_{t}^{i,N})_{1\leq i\leq k}}|(\mathbf{P}_{t}^\ast\mu)^{\otimes k})\leq \frac{2k}{N}\mathrm{Ent}(\L_{(\mathbf{X}_{t}^{i,N})_{1\leq i\leq N}}|(\mathbf{P}_{t}^\ast\mu)^{\otimes N}), \ \ 1\leq k\leq N,$$
we complete the proof of (i). (ii) follows by taking $\mu=\mu^\ast$ in (i). Finally, letting $t\to\infty$ in (ii), we get (iii) by the lower-semicontinuity of relative entropy.
\end{proof}
\section{Applications in path independent case with multiplicative noise}
Let $b: \R^d\times\scr P(\R^d)\to\R^d$, $\sigma:\R^d\times \scr P(\R^d)\to\R^d\otimes\R^{n}$ be measurable, and $W_t$ and $W_t^i$ be defined in Section 1. In this section, we consider
\begin{align}\label{Eb3}\d  X_t=b(X_t,\L_{X_t})\mathrm{d} t+\sigma(X_t,\L_{X_t})\mathrm{d} W_t.
\end{align}
Correspondingly, we investigate
\begin{align*}\d  X_t^i=b(X_t^i,\L_{X_t^i})\mathrm{d} t+\sigma(X_t^i,\L_{X_t^i})\mathrm{d} W_t^i,\ \ 1\leq i\leq N,
\end{align*}
and the mean field interacting particle system:
\begin{align*}\d X^{i,N}_t=b(X_t^{i,N}, \ff{1}{N}\sum_{j=1}^N\dd_{X_t^{j,N}})\d t+\sigma(X^{i,N}_t,\ff{1}{N}\sum_{j=1}^N\dd_{X_t^{j,N}}) \d W^i_t,\ \ 1\leq i\leq N.
\end{align*}
We make the following assumptions.
\begin{enumerate}
\item[{\bf(H)}] $b(x,\mu)=b^{(0)}(x)+\int_{\R^d}b^{(1)}(x,y)\mu(\d y)$, $\sigma(x,\mu)=\int_{\R^d}\tilde{\sigma}(x,y)\mu(\d y)$ and $b^{(0)}$ is continuous. There exist constants $K_1, K_2>0$ with $K_1>8K_2$ such that
\begin{align}\label{und}
&2\<b^{(0)}(x)-b^{(0)}(\tilde{x}),x-\tilde{x}\>\leq -K_1|x-\tilde{x}|^2,\ \   x,\tilde{x}\in\R^d,
\end{align}
\begin{align*}
&\|\tilde{\sigma}(x,y)-\tilde{\sigma}(\tilde{x},\tilde{y})\|_{HS}^2\leq K_2(|x-\tilde{x}|^2+|y-\tilde{y}|^2),\ \   x,\tilde{x},y,\tilde{y}\in\R^d,
\end{align*}
and
$$|b^{(1)}(x,y)-b^{(1)}(\tilde{x},\tilde{y})|\leq K_2(|x-\tilde{x}|+|y-\tilde{y}|),\ \ x,\tilde{x},y,\tilde{y}\in\R^d.$$
\end{enumerate}
The main result in this part is the following theorem.
\begin{thm}\label{POC30}
Assume {\bf(H)} and $\L_{X_0^{1,N}},\L_{X_0^{1}}\in \scr P_2(\R^d)$.
Then the following assertions hold.

(1) There exists a constant $c>0$ such that
\begin{equation}\begin{split}\label{S13}
\E\sum_{i=1}^N|X^i_s-X^{i,N}_s|^2&\leq \e^{-\frac{K_1-8K_2}{2}s}\E\sum_{i=1}^N|X_0^{i,N}-X_0^{i}|^2\\
&+c\left[\frac{4}{(K_1-8K_2)^2}+\frac{4}{K_1-8K_2}\right](1+\E|X_0^1|^{2}),\ \ s\geq 0.
\end{split}\end{equation}

(2) If $\sigma(x,\mu)$ does not depend on $\mu$ and $\delta^{-1}\leq\sigma\sigma^\ast\leq \delta$ holds for some $\delta\geq 1$, then there exists a constant $c>0$ such that
\begin{align*}& \mathrm{Ent}(\L_{(X_{t}^{1,N},X_{t}^{2,N},\cdots, X_{t}^{k,N})}|\L_{(X_{t}^{1},X_{t}^{2},\cdots,X_{t}^{k})})\\
&\leq c\frac{k}{N}\frac{\e^{-\frac{K_1-8K_2}{2}t}}{t\wedge1}\W_2(\L_{(X_{0}^{i,N})_{1\leq i\leq N}},\L_{(X_{0}^{i})_{1\leq i\leq N}})^2+ck\frac{1+\E|X_0^1|^{2}}{N},\ \ 1\leq k\leq N,t>0.
\end{align*}
Consequently, it holds
\begin{align*}&\mathrm{Ent}(\L_{(X_{t}^{1,N},X_{t}^{2,N},\cdots, X_{t}^{k,N})}|(\mu^\ast)^{\otimes k})\\
&\leq c\frac{k}{N}\frac{\e^{-\frac{K_1-8K_2}{2}t}}{t\wedge1}\W_2(\L_{(X_{0}^{i,N})_{1\leq i\leq N}},(\mu^\ast)^{\otimes N})^2+ck\frac{1+\mu^\ast(|\cdot|^2)}{N},\ \ 1\leq k\leq N,t>0,
\end{align*}
and
\begin{align*}&\mathrm{Ent}(\bar{\mu}^N\circ\pi_k^{-1}|(\mu^\ast)^{\otimes k})+\W_2(\bar{\mu}^N\circ\pi_k^{-1},(\mu^\ast)^{\otimes k})^2\leq ck\frac{1+\mu^\ast(|\cdot|^2)}{N},\ \ 1\leq k\leq N,
\end{align*}
where $\pi_k$ is the projecting mapping from $(\R^d)^N$ to $(\R^d)^k$, $\bar{\mu}^N$ and $\mu^\ast$ are the unique invariant probability measures of $(X_t^{1,N}, X_t^{2,N},\cdots,X_t^{N,N})$ and $X_t^1$ respectively.
\end{thm}
\begin{rem}\label{myt} Compared with \cite{JW} for the quantitative propagation of chaos in relative entropy, in Theorem \ref{POC30}(2), to derive the Kac's chaotic property measured in relative entropy at any positive time, it is required that
$$\lim_{N\to\infty}\frac{\W_2(\L_{(X_0^{i,N})_{1\leq i\leq N}},\L_{(X_0^i)_{1\leq i\leq N}})^2}{N}=0$$ instead of
$$\lim_{N\to\infty}\frac{\mathrm{Ent}(\L_{(X_0^{1,N},X_0^{2,N},\cdots,X_0^{N,N})}|\L_{(X_0^1,X_0^2,\cdots, X_0^N)})}{N}=0.$$
The present result allows $\L_{(X_0^{1,N},X_0^{2,N},\cdots,X_0^{N,N})}$ to be singular with $\L_{(X_0^1,X_0^2,\cdots, X_0^N)}$.
\end{rem}
\begin{rem} To derive the uniform in time propagation of chaos in $L^1$-Wasserstein distance, one may adopt the asymptotic reflection coupling and only assume the partially dissipative assumption on $b^{(0)}$, i.e.
\begin{align}\label{par}
&\<b^{(0)}(x)-b^{(0)}(\tilde{x}),x-\tilde{x}\>\leq -K_1|x-\tilde{x}|^21_{\{|x-\tilde{x}|\geq R\}}+K_2|x-\tilde{x}|^21_{\{|x-\tilde{x}|<R\}},\ \   x,\tilde{x}\in\R^d
\end{align}
for some constants $K_1>0, K_2\geq 0, R>0$, see \cite{DEGZ} for the additive noise case and \cite{HX25d} for the case $\sigma$ depending on both spatial and measure variables. However, in the present multiplicative noise case, the partially dissipative assumption \eqref{par} is not sufficient to derive the uniform in time propagation of chaos in $L^2$-Wasserstein distance. This is the reason why we give the uniformly dissipative condition \eqref{und} in Theorem \ref{POC30}. We leave the partially dissipative condition \eqref{par} to derive uniform in time propagation of chaos in $L^2$-Wasserstein distance in the future research.
\end{rem}
Before giving the proof of Theorem \ref{POC30}, we give a lemma on the convergence rate of the law of large number in $L^2(\P)$. Note that when $h$ is $\R^d$ or $\R^{d}\otimes \R^n$-valued, Lemma \ref{CTY} below still holds for $d\tilde{c}$ or $dn\tilde{c}$ replacing $\tilde{c}$ respectively.
\begin{lem}\label{CTY} Let $(V,\|\cdot\|_V)$ be a Banach space. $(Z_i)_{i\geq 1}$ are i.i.d. $V$-valued random variables with $\E\|Z_1\|^2_V<\infty$ and $h:V\times V\to \R$ is  measurable and of at most linear growth, i.e. there exists a constant $c>0$ such that
$$|h(v,\tilde{v})|\leq c(1+\|v\|_V+\|\tilde{v}\|_V),\ \ v,\tilde{v}\in V.$$
Then there exists a constant $\tilde{c}>0$ such that
\begin{align*}\E\left|\frac{1}{N}\sum_{m=1}^N h(Z_1,Z_m)-\int_{V} h(Z_1,y)\L_{Z_1}(\d y)\right|^2\leq \frac{\tilde{c}}{N}\E(1+\|Z_1\|_{V}^2).
\end{align*}
\end{lem}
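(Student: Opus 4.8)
The plan is to recentre the inner sum by its conditional mean given $Z_1$ and then exploit the conditional independence of the i.i.d.\ samples. Set $\bar h(v):=\int_V h(v,y)\L_{Z_1}(\d y)$, which is well defined and finite because the linear growth of $h$ together with $\E\|Z_1\|_V^2<\8$ gives $|\bar h(v)|\le c(1+\|v\|_V+\E\|Z_1\|_V)$; the quantity to be estimated is then $\E\big|\frac1N\sum_{m=1}^N(h(Z_1,Z_m)-\bar h(Z_1))\big|^2$. The one point I would flag is that the summand with $m=1$, namely $h(Z_1,Z_1)-\bar h(Z_1)$, is neither centred nor independent of the remaining terms, so it must be peeled off and handled on its own; this is the only genuine subtlety, and it is mild because that single term carries a prefactor $N^{-1}$.

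First I would isolate the diagonal term and write
\begin{align*}
\frac1N\sum_{m=1}^N\big(h(Z_1,Z_m)-\bar h(Z_1)\big)
&=\frac1N\big(h(Z_1,Z_1)-\bar h(Z_1)\big)+\frac1N\sum_{m=2}^N\big(h(Z_1,Z_m)-\bar h(Z_1)\big).
\end{align*}
Applying $|a+b|^2\le 2|a|^2+2|b|^2$ reduces the task to bounding the two pieces separately. For the diagonal piece, the linear growth bound gives $|h(Z_1,Z_1)-\bar h(Z_1)|\le C(1+\|Z_1\|_V+\E\|Z_1\|_V)$, so its second moment is controlled by $C\,\E(1+\|Z_1\|_V^2)$; since it is multiplied by $N^{-2}$, it contributes at most $\frac{C}{N^2}\E(1+\|Z_1\|_V^2)\le\frac{C}{N}\E(1+\|Z_1\|_V^2)$.

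For the off-diagonal piece I would condition on $Z_1$. Given $Z_1$, the variables $\xi_m:=h(Z_1,Z_m)-\bar h(Z_1)$, $m\ge2$, are i.i.d.\ with conditional mean zero (since $\E[h(Z_1,Z_m)\mid Z_1]=\bar h(Z_1)$) and mutually conditionally independent, so all cross terms vanish and
\begin{align*}
\E\Big[\Big|\sum_{m=2}^N\xi_m\Big|^2\ \Big|\ Z_1\Big]=(N-1)\,\E\big[\xi_2^2\mid Z_1\big]\le(N-1)\,\E\big[h(Z_1,Z_2)^2\mid Z_1\big].
\end{align*}
The linear growth bound yields $\E[h(Z_1,Z_2)^2\mid Z_1]\le C(1+\|Z_1\|_V^2+\E\|Z_1\|_V^2)$, and taking expectations gives $\E\big|\sum_{m=2}^N\xi_m\big|^2\le C\,N\,\E(1+\|Z_1\|_V^2)$. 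Dividing by $N^2$ produces the desired $\frac{C}{N}\E(1+\|Z_1\|_V^2)$ bound, and adding the two contributions yields the claim with a suitable $\tilde c>0$.

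The argument is otherwise routine; there is no serious obstacle, only the bookkeeping of separating the non-centred diagonal term from the conditionally centred tail. Finally, the vector-valued refinement noted before the statement follows by applying the scalar estimate to each coordinate and summing, which accounts for the factors $d$ and $dn$ appearing in front of $\tilde c$.
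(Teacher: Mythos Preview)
Your proof is correct and follows essentially the same approach as the paper: peel off the diagonal summand, apply $|a+b|^2\le 2|a|^2+2|b|^2$, bound the diagonal contribution via the linear growth of $h$, and handle the off-diagonal sum by conditioning on $Z_1$ and using the i.i.d.\ structure of $Z_2,\dots,Z_N$. The only cosmetic difference is that you distribute $\bar h(Z_1)$ so that each off-diagonal summand $h(Z_1,Z_m)-\bar h(Z_1)$ is already conditionally centred, whereas the paper groups the full $\bar h(Z_1)$ with the off-diagonal block and then expands the square to obtain $\frac{N-1}{N^2}\E|h(x,Z_1)|^2+\frac{2-N}{N^2}(\E h(x,Z_1))^2$; your grouping avoids that extra bias term and is slightly cleaner, but the two computations are equivalent.
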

\begin{proof}
Since $\{Z_i\}_{1\leq i\leq N}$ are i.i.d., it follows that
\begin{align*}&\E\left|\frac{1}{N}\sum_{m=1}^N h(Z_1,Z_m)-\int_{V}  h(Z_1,y)\L_{Z_1}(\d y)\right|^2\\
&\leq 2\E\left|\frac{1}{N} h(Z_1,Z_1)\right|^2+2\E\left|\frac{1}{N}\sum_{m=2}^N h(Z_1,Z_m)-\int_{V} h(Z_1,y)\L_{Z_1}(\d y)\right|^2\\
&= 2\E\left|\frac{1}{N} h(Z_1,Z_1) \right|^2\\
&\qquad\quad+2\E\left\{\left\{\E\left|\frac{1}{N}\sum_{m=2}^N h(x,Z_m)-\int_{V} h(x,y)\L_{Z_1}(\d y)\right|^2\right\}\Bigg|_{x=Z_1}\right\}\\
&= 2\E\left|\frac{1}{N} h(Z_1,Z_1)\right|^2\\
&\qquad\quad+2\E\left\{\left\{\frac{N-1}{N^2} \E| h(x,Z_1)|^2+\frac{2-N}{N^2}(\E | h(x,Z_1)|)^2\right\}\Bigg|_{x=Z_1}\right\}\\
&\leq \frac{\tilde{c}}{N}\E(1+\|Z_1\|_V^2)
\end{align*}
for some constant $\tilde{c}>0$. So, the proof is completed.
\end{proof}
\begin{proof}[Proof of Theorem \ref{POC30}] (1) Firstly, {\bf(H)} implies that
\begin{align}\label{S-S}&\nonumber\|\sigma(x,\mu)-\sigma(\tilde{x},\nu)\|_{HS}^2\\
\nonumber&= \inf_{\pi\in \mathbf{C}(\mu,\nu)}\left\|\int_{\R^d\times\R^d}[\tilde{\sigma}(x,y)-\tilde{\sigma}(\tilde{x},\tilde{y})]\pi(\d y,\d\tilde{y})\right\|_{HS}^2\\
&\leq \inf_{\pi\in \mathbf{C}(\mu,\nu)}\int_{\R^d\times\R^d}\|\tilde{\sigma}(x,y)-\tilde{\sigma}(\tilde{x},\tilde{y})\|_{HS}^2\pi(\d y,\d\tilde{y})\\
\nonumber&\leq K_2(|x-\tilde{x}|^2+\W_2(\mu,\nu)^2),\ \ x,\tilde{x}\in\R^d,\mu,\nu\in\scr P_2(\R^d),
\end{align}
and
\begin{align}\label{b-b}\nonumber&2\left\<\int_{\R^d}b^{(1)}(x,y)\mu(\d y)-\int_{\R^d}b^{(1)}(\tilde{x},\tilde{y})\nu(\d \tilde{y}),x-\tilde{x}\right\>\\
&\leq  2K_2|x-\tilde{x}|^2+2K_2\W_1(\mu,\nu)|x-\tilde{x}|\\
\nonumber&\leq 3K_2|x-\tilde{x}|^2+K_2\W_1(\mu,\nu)^2,\ \ x,\tilde{x}\in\R^d,\mu,\nu\in\scr P_2(\R^d).
\end{align}
By \eqref{S-S} and \eqref{b-b}, we conclude that
\begin{align*}
&2\<b(x,\mu)-b(\tilde{x},\nu),x-\tilde{x}\>+\|\sigma(x,\mu)-\sigma(\tilde{x},\nu)\|_{HS}^2\\
&\leq (-K_1+4K_2)|x-\tilde{x}|^2+2K_2\W_2(\mu,\nu)^2,\ \ x,\tilde{x}\in\R^d,\mu,\nu\in\scr P_2(\R^d).
\end{align*}
This together with $K_1>8K_2$ and \cite[Theorem 3.1]{W18} yields that \eqref{Eb3} has a unique invariant probability measure $\mu^\ast$ satisfying
\begin{align}\label{INV}\W_2(\L_{X_t^1},\mu^\ast)^2\leq \e^{-(K_1-6K_2)t}\W_2(\L_{X_0^1},\mu^\ast)^2, \ \ t\geq 0,
\end{align}
and there exists a constant $C>0$ such that
\begin{align}\label{uni00}
\sup_{t\geq 0}\E(1+|X_t^1|^2)<C(1+\E|X_0^1|^2).
\end{align}
Applying It\^{o}'s formula, we derive from {\bf(H)} that
\begin{align}\label{GYA}
\nonumber&\d|X^{i,N}_t-X^{i}_t|^2\\
\nonumber&\leq -K_1|X^{i,N}_t-X^{i}_t|^2\d t+\d M_t^i\\
&+\left\|\frac{1}{N}\sum_{m=1}^N  \tilde{\sigma}(X_t^{i,N},X_t^{m,N})-\int_{\R^d}\tilde{\sigma}(X_t^{i},y)\L_{X_t^1}(\d y)\right\|_{HS}^2\d t\\
\nonumber&+2\left\<\frac{1}{N}\sum_{m=1}^N  b^{(1)}(X_t^{i,N},X_t^{m,N})-\int_{\R^d} b^{(1)}(X_t^{i},y)\L_{X_t^1}(\d y),X^{i,N}_t-X^{i}_t\right\>\d t
\end{align}
for some martingale $M_t^i$. Observe that
\begin{align}\label{WDD}\W_2(\frac{1}{N}\sum_{i=1}^N\delta_{x_i},\frac{1}{N} \sum_{i=1}^N\delta_{\tilde{x}_i})^2\leq \frac{1}{N}\sum_{i=1}^N|x_i-\tilde{x}_i|^2,\ \ x_i,\tilde{x}_i\in\R^d, 1\leq i\leq N.
\end{align}
Again by \eqref{S-S}, \eqref{b-b} and \eqref{WDD}, we have
\begin{align}\label{b1x}
\nonumber&2\left\<\frac{1}{N}\sum_{m=1}^N  b^{(1)}(X_t^{i,N},X_t^{m,N})-\int_{\R^d} b^{(1)}(X_t^{i},y)\L_{X_t^1}(\d y),X^{i,N}_t-X^{i}_t\right\>\\
\nonumber&\leq 2\left\<\frac{1}{N}\sum_{m=1}^N  b^{(1)}(X_t^{i,N},X_t^{m,N})-\frac{1}{N}\sum_{m=1}^N  b^{(1)}(X_t^{i},X_t^{m}),X^{i,N}_t-X^{i}_t\right\>\\
&+2\left\<\frac{1}{N}\sum_{m=1}^N  b^{(1)}(X_t^{i},X_t^{m})-\int_{\R^d} b^{(1)}(X_t^{i},y)\L_{X_t^1}(\d y),X^{i,N}_t-X^{i}_t\right\>\\
\nonumber&\leq 3K_2|X^{i,N}_t-X^{i}_t|^2+K_2\frac{1}{N}\sum_{m=1}^N |X_t^{m,N}-X_t^{m}| ^2+\frac{K_1-8K_2}{2}|X^{i,N}_t-X^{i}_t|^2\\
\nonumber&+\frac{2}{K_1-8K_2}\left|\frac{1}{N}\sum_{m=1}^Nb^{(1)}(X_t^{i}, X_t^{m})-\int_{\R^d}b^{(1)}(X_t^{i}, y)\L_{X_t^1}(\d y)\right|^2,
\end{align}
and
\begin{align}\label{sigm0}
\nonumber&\left\|\frac{1}{N}\sum_{m=1}^N  \tilde{\sigma}(X_t^{i,N},X_t^{m,N})-\int_{\R^d} \tilde{\sigma}(X_t^{i},y)\L_{X_t^1}(\d y)\right\|_{HS}^2\\
\nonumber&\leq 2\left\|\frac{1}{N}\sum_{m=1}^N  \tilde{\sigma}(X_t^{i,N},X_t^{m,N})-\frac{1}{N}\sum_{m=1}^N  \tilde{\sigma}(X_t^{i},X_t^{m})\right\|_{HS}^2\\
&+2\left\|\frac{1}{N}\sum_{m=1}^N  \tilde{\sigma}(X_t^{i},X_t^{m})-\int_{\R^d} \tilde{\sigma}(X_t^{i},y)\L_{X_t^1}(\d y)\right\|_{HS}^2\\
\nonumber&\leq 2K_2|X^{i,N}_t-X^{i}_t|^2+2K_2\frac{1}{N}\sum_{m=1}^N |X_t^{m,N}-X_t^{m}|^2\\
\nonumber&+2\left\|\frac{1}{N}\sum_{m=1}^N  \tilde{\sigma}(X_t^{i},X_t^{m})-\int_{\R^d} \tilde{\sigma}(X_t^{i},y)\L_{X_t^1}(\d y)\right\|_{HS}^2.
\end{align}
Substituting \eqref{b1x}-\eqref{sigm0} into \eqref{GYA} and using Lemma \ref{CTY} and \eqref{uni00}, we can find a constant $c>0$ such that for any $s\geq 0$,
\begin{align*}
&\E\sum_{i=1}^N|X^{i,N}_s-X^{i}_s|^2\\
&\leq \e^{-\frac{K_1-8K_2}{2}s}\E\sum_{i=1}^N|X^{i,N}_0-X^{i}_0|^2\\
&+\int_0^s\frac{2\e^{-\frac{K_1-8K_2}{2}(s-t)}}{K_1-8K_2}\sum_{i=1}^N\E\left|\frac{1}{N}\sum_{m=1}^Nb^{(1)}(X_t^{i}, X_t^{m})-\int_{\R^d}b^{(1)}(X_t^{i}, y)\L_{X_t^1}(\d y)\right|^2\d t\\
&+\int_0^s2\e^{-\frac{K_1-8K_2}{2}(s-t)}\sum_{i=1}^N\E\left\|\frac{1}{N}\sum_{m=1}^N  \tilde{\sigma}(X_t^{i},X_t^{m})-\int_{\R^d} \tilde{\sigma}(X_t^{i},y)\L_{X_t^1}(\d y)\right\|_{HS}^2\d t\\
&\leq \e^{-\frac{K_1-8K_2}{2}s}\E\sum_{i=1}^N|X^{i,N}_0-X^{i}_0|^2+\int_0^s\frac{2\e^{-\frac{K_1-8K_2}{2}(s-t)}}{K_1-8K_2} c(1+\E|X_0^1|^2)\d t\\
&+\int_0^s2\e^{-\frac{K_1-8K_2}{2}(s-t)}c(1+\E|X_0^1|^2)\d t\\
&\leq  \e^{-\frac{K_1-8K_2}{2}s}\E\sum_{i=1}^N|X^{i,N}_0-X^{i}_0|^2+\left[\frac{4}{(K_1-8K_2)^2} +\frac{4}{K_1-8K_2}\right]c(1+\E|X_0^1|^2).
\end{align*}
Hence, we finish the proof of (1).

(2)  Define
\begin{align*}\P^{0}:= \P(\ \cdot\ |\F_0),\ \ \E^{0}:= \E(\ \cdot\ | \F_0).
\end{align*}
Let $\L_{\xi|\P^0}$ denote the conditional distribution of a random variable $\xi$ with respect to $\F_0$. Consider
\begin{align}\label{GPSer}\d \bar{X}_t^{i}&= b_t^{(0)}(\bar{X}_t^{i})\d t+\int_{\R^d}b_t^{(1)}(\bar{X}_t^{i}, y)\mu_t(\d y)\d t+  \sigma_t(\bar{X}^{i}_t) \d W^i_t,\ \ \bar{X}_0^{i}=X_0^{i,N}, 1\leq i\leq N.
\end{align}

We first assume that $b^{(1)}$ is bounded.
Rewrite \eqref{GPSer} as
\begin{align}\label{MXY}\nonumber  \d \bar{X}_t^{i}&=  b_t^{(0)}(\bar{X}_t^{i})\d t+\frac{1}{N}\sum_{m=1}^Nb_t^{(1)}(\bar{X}_t^{i}, \bar{X}_t^{m})\d t+  \sigma_t(\bar{X}^{i}_t) \d \hat{W}^i_t,\ \ \bar{X}_0^{i}=X_0^{i,N}, 1\leq i\leq N,
\end{align}
where
\begin{equation*}\begin{split}
&\hat{W}_t^i :=   W_t^i-\int_0^t \gamma_s^i\d s,\\ &\gamma_t^i:=[\sigma_t^\ast(\sigma_t\sigma_t^\ast)^{-1}](\bar{X}^{i}_t)\left(\frac{1}{N}\sum_{m=1}^Nb_t^{(1)}(\bar{X}_t^{i}, \bar{X}_t^{m})-\int_{\R^d}b_t^{(1)}(\bar{X}_t^{i}, y)\mu_t(\d y)\right),\ \ 1\leq i\leq N.
\end{split}\end{equation*}
Let $t_1\in(0,1]$ and set
\begin{equation*}\begin{split}
&\hat{W}_t=(\hat{W}_t^1,\hat{W}_t^2,\cdots, \hat{W}_t^N),\\
& R_t:= \e^{\int_0^{t}\sum_{i=1}^N\<\gamma_r^i, \d W^i _r\> -\ff 1 2 \int_0^{t} \sum_{i=1}^N|\gamma_r^i|^2\d r},\\
& \Q_t^{0}:= R_t\P^{0}, \ \ t\in [0,t_1].
\end{split}\end{equation*}
Since $b^{(1)}$ is bounded, Girsanov's theorem implies that
$\{\hat{W}_{t}\}_{t\in[0,t_1]}$ is an $(N\times n)$-dimensional Brownian motion under the weighted conditional probability $\Q^{0}_{t_1}$.
So, we have $\L^{\Q^{0}_{t_1}}_{(\bar{X}^i_t)_{1\leq i\leq N}}=\L^{\P^{0}}_{(X_t^{i,N})_{1\leq i\leq N}},\ \ t\in[0,t_1]$.
Using Young's inequality, for any $0< F\in \B_b((\R^d)^N)$, it holds
\beg{align}\label{DDT} \nonumber&\E^{0} \log F(X_{t_1}^{1,N},X_{t_1}^{2,N},\cdots, X_{t_1}^{N,N})\\
\nonumber&\le \log \E^{0} [ F(\bar{X}_{t_1}^{1},\bar{X}_{t_1}^{2},\cdots,\bar{X}_{t_1}^{N})]\\
 &+ \frac{\delta}{2} \sum_{i=1}^N\E_{\Q^{0}_{t_1}}\int_0^{t_1}\left|\frac{1}{N}\sum_{m=1}^Nb_t^{(1)}(\bar{X}_t^{i}, \bar{X}_t^{m})-\int_{\R^d}b_t^{(1)}(\bar{X}_t^{i}, y)\mu_t(\d y)\right|^2\d t\\
\nonumber&=\log \E^{0} [ F(\bar{X}_{t_1}^{1},\bar{X}_{t_1}^{2},\cdots,\bar{X}_{t_1}^{N})]\\
\nonumber &+ \frac{\delta}{2} \sum_{i=1}^N\E^0\int_0^{t_1}\left|\frac{1}{N}\sum_{m=1}^Nb_t^{(1)}(X_t^{i,N}, X_t^{m,N})-\int_{\R^d}b_t^{(1)}(X_t^{i,N}, y)\mu_t(\d y)\right|^2\d t.
\end{align}
In general, let $b^{(1,(n))}=(-n\vee \<b^{(1)},e_i\>\wedge n)_{1\leq i\leq d}, n\geq 1$ with $(e_i)_{1\leq i\leq d}$ being a standard orthogonal basis in $\R^d$. \eqref{DDT} follows by an approximation technique, \cite[Theorem 1.4.2(2)]{Wbook} and the lower-semicontinuity of relative entropy.


Next, let $$b_t^{\mu}(x)=b_t\left(x,\mu_t\right), \ \ t\in[0,1], x\in\R^d.$$
Observe that
\begin{align*}
\d X_t^{i}=b_t^{\mu}(X_t^{i})+\sigma_t(X_t^{i})\d W_t^i,\ \ X_0^i=X_0^i, 1\leq i\leq N,
\end{align*}
and
\begin{align*}
\d \bar{X}_t^{i}=b_t^{\mu}(\bar{X}_t^{i})+\sigma_t(\bar{X}_t^{i})\d W_t^i,\ \ \bar{X}_0^i=X_0^{i,N}, 1\leq i\leq N.
\end{align*}

By \cite[Theorem 3.4.1]{Wbook} and {\bf(H)}, for large enough $p>1$ , we get the Harnack inequality with power $p$:
\begin{align}\label{Har}\nonumber&\left(\E^{0} [F(\bar{X}_t^{1},\bar{X}_t^{2},\cdots,\bar{X}_t^{N})]\right)^p\\
&\leq \E^{0} [F(X_t^{1},X_t^{2},\cdots,X_t^{N})^p]\\
\nonumber&\qquad\quad\times\exp\left\{\frac{c(p)\sum_{i=1}^N|X_0^{i,N}-X_0^{i}|^2}{t}\right\}, \ \ F\in \scr B^+_b((\R^d)^N),t\in(0,1].
\end{align}
In view of \cite[Theorem 1.4.2(1)]{Wbook}, \eqref{Har} is equivalent to
\begin{align}\label{PES}\nonumber&\int_{(\R^{d})^N}\left(\frac{\d \L_{\{\bar{X}_t^{i}\}_{1\leq i\leq N}|\P^0}}{\d \L_{\{X_t^i\}_{1\leq i\leq N}|\P^0}}\right)^{\frac{p}{p-1}}\d \L_{\{X_t^i\}_{1\leq i\leq N}|\P^0}\\
&\leq \exp\left\{\frac{c(p)\sum_{i=1}^N|X_0^{i,N}-X_0^{i}|^2}{(p-1)t}\right\},\ \ t\in(0,1].
\end{align}

\eqref{PES} together with \eqref{DDT}, \cite[Theorem 1.4.2(2)]{Wbook}, \cite[Lemma 2.1]{23RW} implies that
there exists $p>1$ such that
\begin{align*}
\nonumber&\E^{0} \log F(X_{t}^{1,N},X_{t}^{2,N},\cdots, X_{t}^{N,N})\\
\nonumber&\leq  \log \E^{0} [ F(X_{t}^{1},X_{t}^{2},\cdots,X_{t}^{N})]\\
+ &\frac{\delta}{2}p\sum_{i=1}^N\int_0^{t}\E^0\left|\frac{1}{N}\sum_{m=1}^N b_r^{(1)}(X_r^{i,N},X_r^{m,N})-\int_{\R^d} b_r^{(1)}(X_r^{i,N},y)\mu_r(\d y)\right|^2\d r\\
\nonumber&+\frac{c(p)\sum_{i=1}^N|X_0^{i,N}-X_0^{i}|^2}{t}, \ \ 0< F\in \B_b((\R^d)^N), t\in(0,1]
\end{align*}
for some constant $c(p)>0$.
Taking expectation on both sides and using Jensen's inequality and the symmetry of $(X^{i,N})_{1\leq i\leq N}$, we have
\begin{align}\label{ENW}
\nonumber&\E \log F(X_{t}^{1,N},X_{t}^{2,N},\cdots, X_{t}^{N,N})\\
\nonumber&\leq  \log \E [ F(X_{t}^{1},X_{t}^{2},\cdots,X_{t}^{N})]\\
&+ \frac{\delta}{2}p\sum_{i=1}^N\int_0^{t}\E\left|\frac{1}{N}\sum_{m=1}^N b_r^{(1)}(X_r^{i,N},X_r^{m,N})-\int_{\R^d} b_r^{(1)}(X_r^{i,N},y)\L_{X_r^1}(\d y)\right|^2\d r\\
\nonumber&+\frac{c(p)\E\sum_{i=1}^N|X_0^{i,N}-X_0^{i}|^2}{t}.
\end{align}
Combining Lemma \ref{CTY} with {\bf(H)}, \eqref{S13} and \eqref{uni00}, there exists constants $c, C_0>0$ such that
\begin{align}\label{A4}
\nonumber&\sum_{i=1}^N\E\left|\frac{1}{N}\sum_{m=1}^N  b_t^{(1)}(X_t^{i,N},X_t^{m,N})-\int_{\R^d} b_t^{(1)}(X_t^{i,N},y)\mu_t(\d y)\right|^2\\
\nonumber&\leq 2\sum_{i=1}^N\E\bigg|\frac{1}{N}\sum_{m=1}^N b_t^{(1)}(X_t^{i,N},X_t^{m,N})-\int_{\R^d} b_t^{(1)}(X_t^{i,N},y)\mu_t(\d y)\\
&\qquad\qquad\quad-\bigg(\frac{1}{N}\sum_{m=1}^N  b_t^{(1)}(X_t^{i},X_t^{m})-\int_{\R^d} b_t^{(1)}(X_t^{i},y)\mu_t(\d y)\bigg)\bigg|^2\\
\nonumber&+2\sum_{i=1}^N\E\left|\frac{1}{N}\sum_{m=1}^N  b_t^{(1)}(X_t^{i},X_t^{m})-\int_{\R^d} b_t^{(1)}(X_t^{i},y)\mu_t(\d y)\right|^2\\
\nonumber&\leq 16K_2^2\sum_{i=1}^N\E|X^{i,N}_t-X^{i}_t|^2+4K_2^2\sum_{m=1}^N\E|X_t^{m,N}-X_t^{m}|^2+2c\E(1+|X_t^{1}|^2)\\
\nonumber&\leq C_0\sum_{i=1}^{N}\E|X_0^{i,N}-X_0^{i}|^2+C_0(1+\E|X_0^1|^{2}).
\end{align}
Substituting this into \eqref{ENW}, we get
 \beg{align}\label{fie13}\nonumber&\E \log F(X_{t}^{1,N},X_{t}^{2,N},\cdots, X_{t}^{N,N})\\
 &\le \log \E [ F(X_{t}^{1},X_{t}^{2},\cdots,X_{t}^{N})]+c(p)t\left(\sum_{i=1}^{N}\E|X_0^{i,N}-X_0^{i}|^2+(1+\E|X_0^1|^{2})\right)\\
\nonumber&+\frac{c(p)}{t}\sum_{i=1}^{N}\E|X_0^{i,N}-X_0^{i}|^2, \ \ 0< F\in \B_b((\R^d)^N), t\in(0,1].
\end{align}
Combining this with \cite[Theorem 1.4.2(2)]{Wbook},
\eqref{S13} and \eqref{uni00},
we may apply Theorem \ref{GRS}(i) for $r_0=0,t_0=1$ to derive the first assertion in (2).

Finally, combining \eqref{WDD} with \eqref{S-S} and \eqref{b-b}, we derive
\begin{align}\label{bxb}\nonumber&2\sum_{i=1}^N\<b(x_i,\frac{1}{N}\sum_{i=1}^N\delta_{x_i})-b(\tilde{x}_i,\frac{1}{N} \sum_{i=1}^N\delta_{\tilde{x}_i}),x_i-\tilde{x}_i\>\\
\nonumber&+\sum_{i=1}^N\|\sigma(x_i,\frac{1}{N}\sum_{i=1}^N\delta_{x_i})-\sigma(\tilde{x}_i,\frac{1}{N} \sum_{i=1}^N\delta_{\tilde{x}_i})\|_{HS}^2\\
&\leq -K_1\sum_{i=1}^N|x_i-\tilde{x}_i|^2+\sum_{i=1}^N 3K_2|x_i-\tilde{x}_i|^2+K_2\sum_{i=1}^N\frac{1}{N}\sum_{i=1}^N|x_i-\tilde{x}_i|^2\\
\nonumber&+K_2\sum_{i=1}^N|x_i-\tilde{x}_i|^2+K_2\sum_{i=1}^N\frac{1}{N}\sum_{i=1}^N|x_i-\tilde{x}_i|^2\\
\nonumber&\leq (-K_1+6K_2)|x-\tilde{x}|^2,\ \ x=(x_1,x_2,\cdots,x_N),\tilde{x}=(\tilde{x}_1,\tilde{x}_2,\cdots,\tilde{x}_N)\in(\R^d)^N.
\end{align}
Recall that $K_1>8K_2$. \eqref{bxb} implies that for any $N\geq 1$, $(X_t^{1,N},X_t^{2,N},\cdots,X_t^{N,N})$ has a unique invariant probability measure $\bar{\mu}^N$ which satisfies
\begin{align}\label{bau}&\nonumber\W_2(\L_{(X_t^{1,N}X_t^{1,N},\cdots,X_t^{N,N})},\bar{\mu}^N)^2\\
&\leq \e^{-(K_1-6K_2)t}\W_2(\L_{(X_0^{1,N}X_0^{1,N},\cdots,X_0^{N,N})},\bar{\mu}^N)^2,\ \ N\geq 1, t\geq 0.
\end{align}
So, the last two assertions in (2) follow from \eqref{INV}, \eqref{uni00}, \eqref{bau} and Theorem \ref{GRS}(ii)-(iii).
\end{proof}
\begin{rem}
\eqref{DDT} is nothing else but
 the estimate of relative entropy for two diffusion processes with different drifts and the same initial value. More precisely, consider
 $$\d Y_t^i=b^i_t(Y_t^i)\d t+\d W_t,\ \ i=1,2,$$
with $Y_0^1=Y_0^2$, then it holds
\begin{align*}
&\mathrm{Ent}(\L_{Y_{t}^{1}}|\L_{Y_{t}^{2}})\leq \frac{1}{2}\int_0^t\E|b^1_s(Y_s^1)-b^2_s(Y_s^1)|^2\d s,\ \ t\geq 0.
\end{align*}
The trick to derive \eqref{DDT} is coupling by change of measure. In fact,
\eqref{DDT} can also be gained by the entropy method in \cite{BJW,JW,JW1}.
\end{rem}
\section{Application in path dependent models with additive noise}
Throughout this section, fix a constant $r_0\geq0$. Let $\C^d= C([-r_0,0];\mathbb{R}^{d})$ be equipped with the uniform norm $\|\xi\|_\infty =:\sup_{s\in[-r,0]} |\xi(s)|$. For any $f\in C([-r_0,\infty);\mathbb{R}^{d})$, $t\geq 0$, define $f_t \in \C^d$ as $f_t(s)=f(t+s), s\in [-r_0,0]$. Recall that $\scr P(\C^d)$ is the set of all probability measures in $\C^d$ equipped with the weak topology and
$$\scr P_2(\C^d) = \big\{\mu\in \scr P(\C^d): \mu(\|\cdot\|_\infty^2)<\infty\big\}.$$
$\scr P_2(\C^d)$ is a Polish space under the $L^2$-Wasserstein distance
$$\W_2(\mu,\nu):= \inf_{\pi\in \mathbf{C}(\mu,\nu)} \bigg(\int_{\C^d\times\C^d} \|\xi-\eta\|_\infty^2 \pi(\d \xi,\d \eta)\bigg)^{\ff 1 {2}},\ \ \mu,\nu\in \scr P_{2}(\C^d),$$ where $\mathbf{C}(\mu,\nu)$ is the set of all couplings of $\mu$ and $\nu$.
As in Section 2, for any $k\geq 1$, define the Wasserstein distance on $\scr P_2((\C^d)^k)$:
$$\W_2(\mu,\nu):= \inf_{\pi\in \mathbf{C}(\mu,\nu)} \bigg(\int_{(\C^d)^k\times(\C^d)^k} \sum_{i=1}^k\|\xi^i-\tilde{\xi}^i\|_\infty^2\pi(\d \xi,\d \tilde{\xi})\bigg)^{\ff 1 {2}},\ \ \mu,\nu\in \scr P_{2}((\C^d)^k),$$
where $$\xi=(\xi^1,\xi^2,\cdots,\xi^k), \tilde{\xi}=(\tilde{\xi}^1,\tilde{\xi}^2,\cdots,\tilde{\xi}^k)\in(\C^d)^k.$$
\subsection{The case with non-degenerate diffusion coefficients}
Let $W(t)$ be an $n$-dimensional Brownian motion on some complete filtration probability space $(\Omega, \scr F, (\scr F_t)_{t\geq 0},\P)$. Let $b:\R^d\to\R^d$, $B:\C^d\times\scr P(\C^d)\to\R^d$ and $\sigma: \C^d\times\scr P(\C^d)\to\R^d\otimes\R^{n}$ be measurable and bounded on bounded set. Consider path dependent McKean-Vlasov SDEs:
\begin{align}\label{EPD}\d  X(t)=b(X(t))\mathrm{d} t+B(X_t,\L_{X_t})\d t+\sigma(X_t,\L_{X_t})\mathrm{d} W(t),
\end{align}

Let $X_0$ be an $\F_0$-measurable and $\C^d$-valued random variable,
$N\ge1$ be an integer and $(X_0^i,W^i(t))_{1\le i\le N}$ be i.i.d.\,copies of $(X_0,W(t)).$ Consider
\begin{align*}\d  X^i(t)=b(X^i(t))\mathrm{d} t+B(X^i_t,\L_{X^i_t})\d t+\sigma(X^i_t,\L_{X^i_t})\mathrm{d} W^i(t),\ \ 1\leq i\leq N,
\end{align*}
and the mean field interacting particle system:
\begin{align*}\d X^{i,N}(t)&=
b(X^{i,N}(t))\mathrm{d} t+B(X_t^{i,N}, \ff{1}{N}\sum_{j=1}^N\dd_{X_t^{j,N}})\d t\\
&+\sigma(X_t^{i,N}, \ff{1}{N}\sum_{j=1}^N\dd_{X_t^{j,N}})\mathrm{d} W^i(t),\ \ 1\leq i\leq N.
\end{align*}

We assume
\begin{enumerate}
\item[{\bf(C1)}] $\sigma(\xi,\mu)=\int_{\C^d}\tilde{\sigma}(\xi,\eta)\mu(\d \eta)$. There exists a constant $K_\sigma>0$ and
\begin{align*}
\|\tilde{\sigma}(\xi,\eta)-\tilde{\sigma}(\tilde{\xi},\tilde{\eta})\|^2_{HS}\leq K_\sigma(\|\xi-\tilde{\xi}\|^2_\infty+\|\eta-\tilde{\eta}\|_\infty^2),\ \ \xi,\tilde{\xi},\eta,\tilde{\eta}\in\C^d.
\end{align*}
\item[{\bf(C2)}] $b$ is continuous and there exists a constant $K_b\geq0$ such that
    $$2\<b(x)-b(y),x-y\>\leq -K_b|x-y|^2,\ \ x,y\in\R^d.$$
\item[{\bf(C3)}]  $B(\xi,\mu)=\int_{\C^d}\tilde{B}(\xi,\eta)\mu(\d \eta)$. There exists a constant $K_B>0$ such that
$$|\tilde{B}(\xi,\eta)-\tilde{B}(\tilde{\xi},\tilde{\eta})|\leq K_B(\|\xi-\tilde{\xi}\|_\infty+\|\eta-\tilde{\eta}\|_\infty),\ \ \xi,\tilde{\xi},\eta,\tilde{\eta}\in\C^d.$$
\end{enumerate}
\begin{thm}\label{PDP}
Assume {\bf(C1)}-{\bf(C3)} and $\L_{X_0^{1,N}},\L_{X_0^{1}}\in \scr P_2(\C^d)$. If
\begin{align}\label{vaS}72K_\sigma+8K_B<\sup_{v\in[0,K_b]}v\e^{-v r_0},
\end{align}
then the following assertions hold.

(1) There exists a constant $c>0$ such that
\begin{equation}\begin{split}\label{S13n}
&\E\sum_{i=1}^N\|X^i_s-X^{i,N}_s\|_\infty^2\\
&\leq 2\e^{K_b r_0} \e^{-\e^{K_b r_0}\Lambda s}\E\sum_{i=1}^N\|X_0^{i,N}-X_0^{i}\|_{\infty}^2+c(1+\E\|X_0^1\|_\infty^{2}),\ \ s\geq 0,
\end{split}\end{equation}
where \begin{align}\label{lam}
\Lambda=\frac{1}{2}\left\{\sup_{v\in[0,K_b]}v\e^{-v r_0}-(72K_\sigma+8K_B)\right\}.
\end{align}
(2) If $\sigma$ is a constant matrix and $\sigma\sigma^\ast$ is invertible, then there exists a constant $c>0$ such that
\begin{align*}&\mathrm{Ent}(\L_{(X_{t}^{1,N},X_{t}^{2,N},\cdots, X_{t}^{k,N})}|\L_{(X_{t}^{1},X_{t}^{2},\cdots,X_{t}^{k})})\\
&\leq c\frac{k}{N}\frac{\e^{-\e^{K_b r_0}\Lambda t}}{(t-r_0)\wedge 1}\W_2(\L_{(X_{0}^{i,N})_{1\leq i\leq N}},\L_{(X_{0}^{i})_{1\leq i\leq N}})^2+\frac{ck}{N}(1+\E\|X_0^1\|_\infty^{2}),\ \ 1\leq k\leq N,t>r_0.
\end{align*}
Consequently, it holds
\begin{align*}&\mathrm{Ent}(\L_{(X_{t}^{1,N},X_{t}^{2,N},\cdots, X_{t}^{k,N})}|(\mu^\ast)^{\otimes k})\\
&\leq c\frac{k}{N}\frac{\e^{-\e^{K_b r_0}\Lambda t}}{(t-r_0)\wedge 1}\W_2(\L_{(X_{0}^{i,N})_{1\leq i\leq N}},(\mu^\ast)^{\otimes N})^2+\frac{ck}{N}(1+\mu^\ast(\|\cdot\|_\infty^{2})),\ \ 1\leq k\leq N,t>r_0,
\end{align*}
and
\begin{align*}\mathrm{Ent}(\bar{\mu}^N\circ\pi_k^{-1}|(\mu^\ast)^{\otimes k})\leq ck\frac{1+\mu^\ast(\|\cdot\|_\infty^2)}{N},\ \ 1\leq k\leq N,
\end{align*}
where $\pi_k$ is the projecting mapping from $(\C^d)^N$ to $(\C^d)^k$, $\bar{\mu}^N$ and $\mu^\ast$ are the unique invariant probability measures of $(X_t^{1,N}, X_t^{2,N},\cdots,X_t^{N,N})$ and $X_t^1$ respectively.
\end{thm}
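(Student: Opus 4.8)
The plan is to deduce everything from the general long-time result, Theorem~\ref{GRS}, applied on the path space $E=\C^d$ with $\mathbf{X}^{i,N}_t=X_t^{i,N}$ and $\mathbf{X}^i_t=X_t^i$ the segment processes (time-homogeneous Markov on $\C^d$) and $\mathbf{P}^\ast_t$ the associated nonlinear semigroup on $\scr P_2(\C^d)$ sending the initial law $\mu=\L_{X_0^1}$ to $\L_{X_t^1}$. Concretely, part~(1) will supply the uniform-in-time $\tilde{\W}_2$-contraction, i.e.\ the second inequality of \eqref{EWE} with $r(t)=c\,\e^{-\e^{K_b r_0}\Lambda t}$; the entropy part of (2) will supply the finite-time entropy-cost bound, i.e.\ the first inequality of \eqref{EWE} at a fixed $t_0$; and a dissipativity-based moment estimate $\sup_{t\ge0}\E\|X_t^1\|_\infty^2<\infty$ will give \eqref{uem}. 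Because the memory length is $r_0$, the Girsanov/Harnack step must be run over a time window exceeding $r_0$, which forces the choice $t_0=r_0+1$ and explains the restriction $t\ge r_0+1$ in the statement.

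For (1) I would apply It\^o's formula to $\sum_{i=1}^N|X^{i,N}(t)-X^i(t)|^2$. Assumption {\bf(C2)} produces the instantaneous dissipative term $-K_b\sum_i|X^{i,N}(t)-X^i(t)|^2$, while {\bf(C1)} and {\bf(C3)} bound the contributions of $\sigma$ and $B$ by the windowed quantity $\sup_{u\in[t-r_0,t]}\sum_i|X^{i,N}(u)-X^i(u)|^2$ together with the law-of-large-numbers fluctuations $\frac1N\sum_m\tilde{B}(X^i_t,X^m_t)-\int_{\C^d}\tilde{B}(X^i_t,\eta)\L_{X_t^1}(\d\eta)$ and its analogue for $\tilde{\sigma}$. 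These fluctuations are controlled by Lemma~\ref{CTY} and the uniform moment bound, leaving a delay differential inequality for $\psi(t):=\E\sum_i|X^{i,N}(t)-X^i(t)|^2$ in which the feedback coefficient is $72K_\sigma+8K_B$. The decay is then extracted by a delay-adapted exponential-weight argument: inserting $\e^{\theta t}$ and using $\e^{\theta t}\sup_{u\in[t-r_0,t]}\psi(u)\le\e^{\theta r_0}\sup_{u\in[t-r_0,t]}\e^{\theta u}\psi(u)$, the admissible rate $\theta$ is governed by balancing $72K_\sigma+8K_B$ against $v\,\e^{-vr_0}$ optimised over $v\in[0,K_b]$; condition \eqref{vaS} guarantees a strictly positive gap and hence the rate $\Lambda$ of \eqref{lam}. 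Converting the pointwise decay back to the supremum norm over $[t-r_0,t]$ (with the BDG inequality controlling the martingale part) accounts for the prefactor $2\e^{K_b r_0}$ and the rate $\e^{K_b r_0}\Lambda$ in \eqref{S13n}; dividing by $N$ and invoking exchangeability gives the $\tilde{\W}_2$ statement.

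For (2), constancy and invertibility of $\sigma$ place us in the non-degenerate setting, so I would reproduce the three-step scheme of the proof of Theorem~\ref{POC10} in the path-dependent framework. First, coupling by change of measure (Girsanov) for the auxiliary system with frozen law $\mu_t=\L_{X_t^1}$ gives the entropy bound \eqref{DDT} after truncating $\tilde{B}$ and passing to the limit by lower semicontinuity. Second, the dimension-free Harnack inequality with power for the functional SDE with constant invertible diffusion—available from \cite[Theorem~2.2]{23R} and valid once the elapsed time exceeds $r_0$—gives the density estimate \eqref{PES}. Combining these through \cite[Lemma~2.1]{23RW} and \cite[Theorem~1.4.2]{Wbook} yields the $N$-particle entropy-cost estimate at $t_0=r_0+1$, which is the first inequality of \eqref{EWE}. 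Together with part~(1) and \eqref{uem}, Theorem~\ref{GRS} then delivers the entropy and $\tilde{\W}_2$ bounds for $1\le k\le N$; Pinsker's inequality turns the relative entropy into the total-variation term, the $\tilde{\W}_2$ summand is exactly part~(1), and the contraction encoded in \eqref{vaS} (as in \cite[Theorem~3.1]{W18}) gives the unique invariant measures $\mu^\ast\in\scr P_2(\C^d)$ and $\bar{\mu}^N$. Letting $t\to\infty$ in Theorem~\ref{GRS}(ii)--(iii) produces the stationary statements for $\bar{\mu}^N\circ\pi_k^{-1}$.

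The main obstacle is part~(1): upgrading the pointwise dissipativity {\bf(C2)} to exponential decay of the \emph{supremum-norm} error $\E\sum_i\|X^i_s-X^{i,N}_s\|_\infty^2$ uniformly in $s$. The delay entangles the instantaneous contraction $-K_b$ with the windowed supremum, and the exponential weight has to be tuned so that the feedback coefficient $72K_\sigma+8K_B$ is strictly dominated by $v\,\e^{-vr_0}$ for some $v\in[0,K_b]$; this is precisely the role of \eqref{vaS}, and keeping the constants (the prefactor $2\e^{K_b r_0}$ and the rate $\e^{K_b r_0}\Lambda$) correct while taming the martingale part via BDG is the delicate part. A secondary difficulty is ensuring the path-dependent Harnack inequality holds with a $1/(t-r_0)$-type cost over a window exceeding the memory, which is what pins down $t_0=r_0+1$.
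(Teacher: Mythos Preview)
Your proposal follows essentially the same architecture as the paper: It\^o's formula with an exponential weight plus BDG for part~(1), optimising $v\in[0,K_b]$ to get the rate $\Lambda$; then Girsanov plus a dimension-free power Harnack inequality to obtain the entropy--cost bound at $t_0=r_0+1$, feeding everything into Theorem~\ref{GRS}. Two corrections: the path-dependent Harnack inequality you need is \cite[Theorem~4.2.1]{Wbook} (for functional SDEs with constant diffusion), not \cite[Theorem~2.2]{23R}, which is state-space only; and the invariant measures $\mu^\ast$, $\bar{\mu}^N$ come from \cite[Remark~2.1]{HRW} in the path-dependent setting, not \cite[Theorem~3.1]{W18}---in particular, the existence and $\tilde{\W}_2$-convergence to $\bar{\mu}^N$ for the $N$-particle segment process requires its own contraction estimate (analogous to your part~(1) but comparing two particle systems with different initial data), which you should not skip since Theorem~\ref{GRS}(iii) needs $\lim_{t\to\infty}\tilde{\W}_2(\L_{(\mathbf{X}_t^{i,N})_{1\le i\le N}},\bar{\mu}^N)=0$ as an input.
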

\begin{proof}
Firstly, it is standard to derive from {\bf(C1)}-{\bf(C3)} that for any $T>0$,
\begin{align}\label{mmf} \E\left[1+\sup_{t\in[-r_0,T]}|X^{1}(t)|^2\right]<c_{T}\left(1+\E\|X^1_0\|_\infty^2\right)
\end{align}
and
\begin{align}\label{grf} \E\left[1+\sum_{i=1}^N\sup_{t\in[-r,T]}|X^{i,N}(t)|^2\right]<\tilde{c}_{T,N}\left(1+\E\sum_{i=1}^N\|X^{i,N}_0\|_\infty^2\right)
\end{align}
hold for some constants $c_{T}>0$ and $\tilde{c}_{T,N}>0$.

(1) It follows from It\^{o}'s formula and {\bf(C2)} that
\begin{align}\label{CCS}
\nonumber&\d |X^{i,N}(t)-X^i(t)|^2\leq -K_b|X^{i,N}(t)-X^i(t)|^2\d t\\
&+2\left\<\frac{1}{N}\sum_{m=1}^N  \tilde{B}(X_t^{i,N},X_t^{m,N})-\int_{\C^d} \tilde{B}(X_t^{i},y)\L_{X_t^1}(\d y),X^{i,N}(t)-X^{i}(t)\right\>\d t\\
\nonumber&+\left\|\frac{1}{N}\sum_{m=1}^N  \tilde{\sigma}(X_t^{i},X_t^{m})-\int_{\C^d} \tilde{\sigma}(X_t^{i},y)\L_{X_t^1}(\d y)\right\|_{HS}^2\d t+\d M^i(t),
\end{align}
where \begin{align*}\d M^i(t)&=2\bigg\<\left[\frac{1}{N}\sum_{m=1}^N  \tilde{\sigma}(X_t^{i,N},X_t^{m,N})-\int_{\C^d} \tilde{\sigma}(X_t^{i},y)\L_{X_t^1}(\d y)\right]\d W^i(t),\\
&\qquad\qquad\quad X^{i,N}(t)-X^{i}(t)\bigg\>.
\end{align*}
Similar to \eqref{b1x} and \eqref{sigm0}, we derive from {\bf (C1)} and {\bf(C3)} that
\begin{align}\label{sigmt}
\nonumber&\left\|\frac{1}{N}\sum_{m=1}^N  \tilde{\sigma}(X_t^{i,N},X_t^{m,N})-\int_{\C^d} \tilde{\sigma}(X_t^{i},y)\L_{X_t^1}(\d y)\right\|_{HS}^2\\
&\leq 2K_\sigma\|X^{i,N}_t-X^{i}_t\|_{\infty}^2+2K_\sigma\frac{1}{N}\sum_{m=1}^N \|X_t^{m,N}-X_t^{m}\|_\infty ^2\\
\nonumber&+2\left\|\frac{1}{N}\sum_{m=1}^N  \tilde{\sigma}(X_t^{i},X_t^{m})-\int_{\C^d} \tilde{\sigma}(X_t^{i},y)\L_{X_t^1}(\d y)\right\|_{HS}^2,
\end{align}
and
\begin{align}\label{BMT}
\nonumber&2\left\<\frac{1}{N}\sum_{m=1}^N  \tilde{B}(X_t^{i,N},X_t^{m,N})-\int_{\C} \tilde{B}(X_t^{i},y)\L_{X_t^1}(\d y),X^{i,N}(t)-X^{i}(t)\right\>\\
&\leq 3K_B\|X^{i,N}_t-X^{i}_t\|_{\infty}^2+K_B\frac{1}{N}\sum_{m=1}^N \|X_t^{m,N}-X_t^{m}\|_\infty ^2\\
\nonumber&+\frac{2}{\Lambda}\left|\frac{1}{N}\sum_{m=1}^N\tilde{B}(X_t^{i}, X_t^{m})-\int_{\C^d}\tilde{B}(X_t^{i}, y)\L_{X_t^1}(\d y)\right|^2+\frac{\Lambda}{2}|X^{i,N}(t)-X^{i}(t)|^2,
\end{align}
here $\Lambda$ is defined in \eqref{lam}.
Substituting \eqref{sigmt} and \eqref{BMT} into \eqref{CCS}, we get
\begin{align}\label{Itf}\nonumber&\d [\e^{K_bt}|X^{i,N}(t)-X^i(t)|^2]\leq \e^{K_b t}(3K_B+2K_\sigma)\|X_t^{i,N}-X_t^{i}\|_\infty ^2\d t\\
\nonumber&+\e^{K_b t}(K_B+2K_\sigma)\frac{1}{N}\sum_{m=1}^N \|X_t^{m,N}-X_t^{m}\|_\infty ^2\d t+\e^{K_b t}\frac{\Lambda}{2}|X^{i,N}(t)-X^{i}(t)|^2\d t\\
&+\e^{K_b t}\frac{2}{\Lambda}\left|\frac{1}{N}\sum_{m=1}^N\tilde{B}(X_t^{i}, X_t^{m})-\int_{\C^d}\tilde{B}(X_t^{i}, y)\L_{X_t^1}(\d y)\right|^2\d t\\
\nonumber&+2\e^{K_b t}\left\|\frac{1}{N}\sum_{m=1}^N  \tilde{\sigma}(X_t^{i},X_t^{m})-\int_{\C^d} \tilde{\sigma}(X_t^{i},y)\L_{X_t^1}(\d y)\right\|_{HS}^2\d t+\e^{K_b t}\d M^i(t).
\end{align}
Let $\eta_t=\sum_{i=1}^N\sup_{s\in[-r_0,t]}\e^{K_bs^+}| X^{i,N}(s)-X^{i}(s)|^2$.  We conclude from \eqref{Itf} that
\begin{align}\label{ETa}\nonumber\E\eta_t&\leq \sum_{i=1}^N\E\|X^{i,N}_0-X^i_0\|^2_\infty+\int_0^t\e^{K_b s}\left(4K_B+4K_\sigma+\frac{\Lambda}{2}\right)\E\sum_{i=1}^N\|X_s^{i,N}-X_s^{i}\|_\infty ^2\d s\\
&+\int_0^t\e^{K_b s}\frac{2}{\Lambda}\sum_{i=1}^N\E\left|\frac{1}{N}\sum_{m=1}^N\tilde{B}(X_s^{i}, X_s^{m})-\int_{\C^d}\tilde{B}(X_s^{i}, y)\L_{X_s^1}(\d y)\right|^2\d s\\
\nonumber&+\int_0^t2\e^{K_b s}\sum_{i=1}^N\E\left\|\frac{1}{N}\sum_{m=1}^N  \tilde{\sigma}(X_s^{i},X_s^{m})-\int_{\C^d} \tilde{\sigma}(X_s^{i},y)\L_{X_s^1}(\d y)\right\|_{HS}^2\d s\\
\nonumber&+\sum_{i=1}^N\E\sup_{r\in[0,t]}\int_0^r\e^{K_b s}\d M^i(s).
\end{align}
Applying BDG's inequality, we derive from \eqref{sigmt} that
\begin{align}\label{mgy}
\nonumber&\sum_{i=1}^N\E\sup_{r\in[0,t]}\int_0^r\e^{K_b s}\d M^i(s)\\
\nonumber&\leq \sum_{i=1}^N4\E\bigg\{\int_0^t\e^{2K_b s}\left\|\frac{1}{N}\sum_{m=1}^N  \tilde{\sigma}(X_s^{i,N},X_s^{m,N})-\int_{\C^d} \tilde{\sigma}(X_s^{i},y)\L_{X_s^1}(\d y)\right\|^2\\
\nonumber&\qquad\qquad\quad\times| X^{i,N}(s)-X^{i}(s)|^2\d s\bigg\}^{\frac{1}{2}}\\
&\leq \frac{1}{2}\E\eta_t+8\sum_{i=1}^N\E\int_0^t\e^{K_b s}\left\|\frac{1}{N}\sum_{m=1}^N  \tilde{\sigma}(X_s^{i,N},X_s^{m,N})-\int_{\C^d} \tilde{\sigma}(X_s^{i},y)\L_{X_s^1}(\d y)\right\|^2\d s\\
\nonumber&\leq \frac{1}{2}\E\eta_t+16K_\sigma\sum_{i=1}^N\E\int_0^t\e^{K_b s}\|X^{i,N}_s-X^{i}_s\|_{\infty}^2\d s\\
\nonumber&+16K_\sigma\sum_{i=1}^N\E\int_0^t\e^{K_b s}\frac{1}{N}\sum_{m=1}^N \|X_s^{m,N}-X_s^{m}\|_\infty ^2\d s\\
\nonumber&+16\sum_{i=1}^N\E\int_0^t\e^{K_b s}\left\|\frac{1}{N}\sum_{m=1}^N  \tilde{\sigma}(X_s^{i},X_s^{m})-\int_{\C^d} \tilde{\sigma}(X_s^{i},y)\L_{X_s^1}(\d y)\right\|_{HS}^2\d s.
\nonumber\end{align}
Combining this with \eqref{ETa}, the fact $\eta_s\geq \sum_{i=1}^N\e^{K_b(s-r_0)}\|X_s^{i,N}-X_s^i\|_\infty^2$ and \eqref{mmf}-\eqref{grf}, we obtain
\begin{align*}
\E\eta_t&\leq 2\sum_{i=1}^N\E\|X^{i,N}_0-X^i_0\|_\infty^2+\e^{K_b r_0}[72K_\sigma+8K_B+\Lambda]\int_0^t\E\eta_s\d s\\
&+\int_0^t\e^{K_b s}\frac{4}{\Lambda}\sum_{i=1}^N\E\left|\frac{1}{N}\sum_{m=1}^N\tilde{B}(X_s^{i}, X_s^{m})-\int_{\C^d}\tilde{B}(X_s^{i}, y)\L_{X_s^1}(\d y)\right|^2\d s\\
&+\int_0^t36\e^{K_b s}\sum_{i=1}^N\E\left\|\frac{1}{N}\sum_{m=1}^N  \tilde{\sigma}(X_s^{i},X_s^{m})-\int_{\C^d} \tilde{\sigma}(X_s^{i},y)\L_{X_s^1}(\d y)\right\|_{HS}^2\d s.
\end{align*}
Gronwall's inequality implies that
\begin{align*}\E\eta_t&\leq 2\e^{\e^{K_b r_0}[72K_\sigma+8K_B+\Lambda]t}\sum_{i=1}^N\E\|X^{i,N}_0-X^i_0\|_\infty^2\\
&+\int_0^t\e^{\e^{K_b r_0}[72K_\sigma+8K_B+\Lambda](t-s)}\e^{K_b s}\frac{4}{\Lambda}\\
&\qquad\qquad\quad\times\sum_{i=1}^N\E\left|\frac{1}{N}\sum_{m=1}^N\tilde{B}(X_s^{i}, X_s^{m})-\int_{\C^d}\tilde{B}(X_s^{i}, y)\L_{X_s^1}(\d y)\right|^2\d s\\
&+\int_0^t36\e^{\e^{K_b r_0}[72K_\sigma+8K_B+\Lambda](t-s)}\e^{K_b s}\\
&\qquad\qquad\quad\times\sum_{i=1}^N\E\left\|\frac{1}{N}\sum_{m=1}^N  \tilde{\sigma}(X_s^{i},X_s^{m})-\int_{\C^d} \tilde{\sigma}(X_s^{i},y)\L_{X_s^1}(\d y)\right\|_{HS}^2\d s.
\end{align*}
Again noting $\eta_t\geq \sum_{i=1}^N\e^{K_b(t-r_0)}\|X_t^{i,N}-X_t^i\|_\infty^2$, we arrive at
\begin{align}\label{MIT}
\nonumber&\sum_{i=1}^N\E \|X_t^{i,N}-X_t^i\|_\infty^2\\
\nonumber&\leq 2\e^{K_b r_0}\e^{\e^{K_b r_0}\{72K_\sigma+8K_B-K_b\e^{-K_b r_0}+\Lambda \}t}\sum_{i=1}^N\E\|X^{i,N}_0-X^i_0\|_\infty^2\\
&+\e^{K_b r_0}\frac{4}{\Lambda}\int_0^t\e^{\e^{K_b r_0}[72K_\sigma+8K_B-K_b\e^{-K_b r_0}+\Lambda](t-s)}\\
\nonumber&\qquad\qquad\quad\times \sum_{i=1}^N\E\left|\frac{1}{N}\sum_{m=1}^N\tilde{B}(X_s^{i}, X_s^{m})-\int_{\C^d}\tilde{B}(X_s^{i}, y)\L_{X_s^1}(\d y)\right|^2\d s\\
\nonumber&+36\e^{K_b r_0}\int_0^t\e^{\e^{K_b r_0}[72K_\sigma+8K_B-K_b\e^{-K_b r_0}+\Lambda](t-s)}\\
\nonumber&\qquad\qquad\quad\times\sum_{i=1}^N\E\left\|\frac{1}{N}\sum_{m=1}^N  \tilde{\sigma}(X_s^{i},X_s^{m})-\int_{\C^d} \tilde{\sigma}(X_s^{i},y)\L_{X_s^1}(\d y)\right\|_{HS}^2\d s.
\end{align}
Since {\bf(C2)} holds for any $v\in[0,K_b]$, replacing $K_b$ with any $v\in[0,K_b]$ in \eqref{MIT} and combining with \eqref{lam}, we conclude that
\begin{align}\label{RES}
\nonumber&\sum_{i=1}^N\E \|X_t^{i,N}-X_t^i\|_\infty^2\\
\nonumber&\leq 2\e^{K_b r_0}\e^{-\e^{K_b r_0}\Lambda t}\sum_{i=1}^N\E\|X^{i,N}_0-X^i_0\|_\infty^2\\
&+\e^{K_b r_0}\frac{4}{\Lambda}\int_0^t\e^{-\e^{K_b r_0}\Lambda(t-s)}\sum_{i=1}^N\E\left|\frac{1}{N}\sum_{m=1}^N\tilde{B}(X_s^{i}, X_s^{m})-\int_{\C^d}\tilde{B}(X_s^{i}, y)\L_{X_s^1}(\d y)\right|^2\d s\\
\nonumber&+36\e^{K_b r_0}\int_0^t\e^{-\e^{K_b r_0}\Lambda(t-s)}\sum_{i=1}^N\E\left\|\frac{1}{N}\sum_{m=1}^N  \tilde{\sigma}(X_s^{i},X_s^{m})-\int_{\C^d} \tilde{\sigma}(X_s^{i},y)\L_{X_s^1}(\d y)\right\|_{HS}^2\d s.
\end{align}

Next, similar to \eqref{S-S} and \eqref{b-b}, {\bf(C1)} and {\bf(C3)} imply that
\begin{align}\label{SDS}&\|\sigma(\xi,\mu)-\sigma(\tilde{\xi},\nu)\|_{HS}^2
\leq K_\sigma(\|\xi-\tilde{\xi}\|_\infty^2+\W_2(\mu,\nu)^2),\ \ \xi,\tilde{\xi}\in\C^d,\mu,\nu\in\scr P_2(\C^d)
\end{align}
and
\begin{align}\label{BLI}
\nonumber&2\<B(\xi,\mu)-B(\tilde{\xi},\nu),\xi(0)-\tilde{\xi}(0)\>\\
&\leq 3K_B\|\xi-\tilde{\xi}\|_\infty^2+K_B\W_1(\mu,\nu)^2,\ \ \xi,\tilde{\xi}\in\C^d,\mu,\nu\in\scr P_2(\C^d).
\end{align}
\eqref{SDS}-\eqref{BLI} together with {\bf(C2)} yield that for any $\xi,\tilde{\xi}\in\C,\mu,\nu\in\scr P_2(\C^d)$,
\begin{align}\label{TTS}
\nonumber&2\<b(\xi(0))-b(\tilde{\xi}(0))+B(\xi,\mu)-B(\tilde{\xi},\nu),\xi(0)-\tilde{\xi}(0)\>+\|\sigma(\xi,\mu)-\sigma(\tilde{\xi},\nu)\|_{HS}^2\\
&\leq -K_b|\xi(0)-\tilde{\xi}(0)|^2+(3K_B+K_\sigma)\|\xi-\tilde{\xi}\|_\infty^2+(K_B+K_\sigma)\W_2(\mu,\nu)^2,
\end{align}
\begin{align}\label{s-s}
\|\sigma(\xi,\mu)\|_{HS}^2\leq 2K_\sigma(\|\xi\|_\infty^2+\mu(\|\cdot\|_\infty^2))+2\|\sigma(0,\delta_\mathbf{0})\|_{HS}^2,
\end{align}
and
\begin{align}\label{MOM}
\nonumber&2\<b(\xi(0))+B(\xi,\mu),\xi(0)\>+\|\sigma(\xi,\mu)\|_{HS}^2\\
\nonumber&\leq -K_b|\xi(0)|^2+(3K_B+2K_\sigma)\|\xi\|_\infty^2+(K_B+2K_\sigma)\mu(\|\cdot\|_\infty^2)\\
&\quad+2\<b(0)+B(0,\delta_\mathbf{0}),\xi(0)\>+2\|\sigma(0,\delta_\mathbf{0})\|_{HS}^2\\
\nonumber&\leq -K_b|\xi(0)|^2+(3K_B+2K_\sigma)\|\xi\|_\infty^2+(K_B+2K_\sigma)\mu(\|\cdot\|_\infty^2)\\
\nonumber&\quad+\frac{\Lambda}{2}|\xi(0)|^2+\frac{2}{\Lambda}|b(0)+B(0,\delta_\mathbf{0})|^2+2\|\sigma(0,\delta_\mathbf{0})\|_{HS}^2,
\end{align}
here $\mathbf{0}$ is the zero element in $\C^d$. By It\^{o}'s formula and \eqref{MOM}, we obtain
\begin{align}\label{UNI}
\nonumber&\d [\e^{K_bt}|X^1(t)|^2]\leq \e^{K_b t}\left(3K_B+2K_\sigma+\frac{\Lambda}{2}\right)\|X_t^{1}\|_\infty ^2\d t+\e^{K_b t}(K_B+2K_\sigma)\E\|X_t^{1}\|_\infty ^2\d t\\
&+\e^{K_b t}\left\{\frac{2}{\Lambda}|b(0)+B(0,\delta_\mathbf{0})|^2+2\|\sigma(0,\delta_\mathbf{0})\|_{HS}^2\right\}\d t\\
\nonumber&+2\e^{K_b t}\<\sigma(X_t^1,\L_{X_t^1})\d W^1(t),X^1(t)\>.
\end{align}
Observing \eqref{vaS} and applying the same argument to derive \eqref{RES}, we conclude from \eqref{s-s} and \eqref{UNI} that there exists a constant $C>0$ such that
\begin{align}\label{uni}
\sup_{t\geq 0}\E(1+\|X_t^1\|_\infty^2)<C(1+\E\|X_0^1\|_\infty^2).
\end{align}
Finally, combining \eqref{RES} with Lemma \ref{CTY} and \eqref{uni}, we derive \eqref{S13n}.

Furthermore, by \eqref{vaS}, \eqref{SDS}, \eqref{TTS}, \eqref{uni} and \cite[Remark 2.1]{HRW},
\eqref{EPD} has a unique invariant probability measure $\mu^\ast$ with
\begin{align}\label{INP}
\W_2(\L_{X_t^1},\mu^\ast)^2\leq C\e^{-\lambda t}\W_2(\L_{X_0^1},\mu^\ast)^2,\ \ t\geq 0
\end{align}
for some constant $C,\lambda>0$.

(2) We first prove that for any $N\geq 1$, $(X_t^{1,N},X_t^{2,N},\cdots,X_t^{N,N})$ has a unique invariant probability measure $\bar{\mu}^N$. Let $\xi=(\xi_1,\xi_2,\cdots,\xi_N),\tilde{\xi}=(\tilde{\xi}_1,\tilde{\xi}_2,\cdots,\tilde{\xi}_N)\in(\C^d)^N$. Observe that \eqref{SDS}-\eqref{TTS} yield
\begin{align}\label{XTY}
\nonumber&\left\|\sigma(\xi_i,\frac{1}{N}\sum_{i=1}^N\delta_{\xi_i})-\sigma(\tilde{\xi}_i,\frac{1}{N} \sum_{i=1}^N\delta_{\tilde{\xi}_i})\right\|_{HS}^2\\
\nonumber&=\left\|\frac{1}{N}\sum_{j=1}^N\{\tilde{\sigma}(\xi_i,\xi_j)-\tilde{\sigma} (\tilde{\xi}_i,\tilde{\xi}_j)\}\right\|_{HS}^2\\
&\leq \frac{1}{N}\sum_{j=1}^N\|\tilde{\sigma}(\xi_i,\xi_j)-\tilde{\sigma} (\tilde{\xi}_i,\tilde{\xi}_j)\|_{HS}^2\\
\nonumber&\leq \frac{1}{N}\sum_{j=1}^NK_\sigma(\|\xi_i-\tilde{\xi}_i\|^2_\infty+\|\xi_j-\tilde{\xi}_j\|_\infty^2)\\
\nonumber&\leq K_\sigma \|\xi_i-\tilde{\xi}_i\|^2_\infty+ \frac{1}{N}\sum_{j=1}^NK_\sigma\|\xi_j-\tilde{\xi}_j\|_\infty^2,
\end{align}
and
\begin{align}\label{y-x}
\nonumber&2\<b(\xi_i(0))-b(\tilde{\xi}_i(0)),\xi_i(0)-\tilde{\xi}_i(0)\>\\
&\nonumber+2\left\<B(\xi_i,\frac{1}{N}\sum_{i=1}^N\delta_{\xi_i}) -B(\tilde{\xi}_i,\frac{1}{N} \sum_{i=1}^N\delta_{\tilde{\xi}_i}),\xi_i(0)-\tilde{\xi}_i(0)\right\>\\
&+\left\|\sigma(\xi_i,\frac{1}{N}\sum_{i=1}^N\delta_{\xi_i})-\sigma(\tilde{\xi}_i,\frac{1}{N} \sum_{i=1}^N\delta_{\tilde{\xi}_i})\right\|_{HS}^2\\
\nonumber&\leq -K_b|\xi_i(0)-\tilde{\xi}_i(0)|^2+3K_B \|\xi_i-\tilde{\xi}_i\|^2_\infty +K_B\frac{1}{N}\sum_{j=1}^N\|\xi_j-\tilde{\xi}_j\|_\infty^2\\
\nonumber&+K_\sigma \|\xi_i-\tilde{\xi}_i\|^2_\infty+\frac{1}{N}\sum_{j=1}^NK_\sigma\|\xi_j-\tilde{\xi}_j\|_\infty^2.
\end{align}
Let $(X^{i,N}(t))_{1\leq i\leq N}$ and $(\tilde{X}^{i,N}(t))_{1\leq i\leq N}$ be the solutions to the mean field interacting particle system with initial values $(X^{i,N}_0)_{1\leq i\leq N}$ and $(\tilde{X}^{i,N}_0)_{1\leq i\leq N}$ respectively. Applying It\^{o}'s formula, it follows from \eqref{XTY} and \eqref{y-x} that
\begin{align}\label{Itg}\nonumber&\d [\e^{K_bt}|X^{i,N}(t)-\tilde{X}^{i,N}(t)|^2]\leq \e^{K_b t}(3K_B+K_\sigma)\|X_t^{i,N}-\tilde{X}^{i,N}(t)\|_\infty ^2\d t\\
&+\e^{K_b t}(K_B+K_\sigma)\frac{1}{N}\sum_{m=1}^N \|X_t^{m,N}-\tilde{X}_t^{m,N}\|_\infty ^2\d t+\e^{K_b t}\d \tilde{M}^i(t),
\end{align}
where
\begin{align*}\d \tilde{M}^i(t)&=2\bigg\<\left[\frac{1}{N}\sum_{m=1}^N  \tilde{\sigma}(X_t^{i,N},X_t^{m,N})-\frac{1}{N}\sum_{m=1}^N  \tilde{\sigma}(\tilde{X}_t^{i,N},\tilde{X}_t^{m,N})\right]\d W^i(t),\\
&\qquad\qquad\quad X^{i,N}(t)-\tilde{X}^{i,N}(t)\bigg\>.
\end{align*}
Similar to \eqref{mgy}, we conclude from BDG's inequality and \eqref{XTY} that
\begin{align*}
&\sum_{i=1}^N\E\sup_{r\in[0,t]}\int_0^r\e^{K_b s}\d \tilde{M}^i(s)\leq \frac{1}{2}\E\tilde{\eta}_t+16K_\sigma\sum_{i=1}^N\E\int_0^t\e^{K_b s}\|X^{i,N}_s-\tilde{X}^{i,N}_s\|_{\infty}^2\d s
\end{align*}
for $\tilde{\eta}_t=\sum_{i=1}^N\sup_{s\in[-r_0,t]}\e^{K_bs^+}| X^{i,N}(s)-\tilde{X}^{i,N}(s)|^2$.

By the same argument to derive \eqref{MIT}, it is not difficult to see from \eqref{Itg} that
\begin{align}\label{Mkt}\sum_{i=1}^N\E \|X_t^{i,N}-\tilde{X}_t^{i,N}\|_\infty^2&\leq 2\e^{K_b r_0}\e^{\e^{K_b r_0}\{36K_\sigma+8K_B-K_b\e^{-K_b r_0} \}t}\sum_{i=1}^N\E\|X^{i,N}_0-\tilde{X}_0^{i,N}\|_\infty^2.
\end{align}
Let
\begin{align}\label{lamti}
\tilde{\Lambda}=\sup_{v\in[0,K_b]}v\e^{-v r_0}-(36K_\sigma+8K_B).
\end{align}
Then $\tilde{\Lambda}>0$ due to \eqref{vaS}.
Again noting that {\bf(C2)} holds for any $v\in[0,K_b]$, replacing $K_b$ with any $v\in[0,K_b]$ in \eqref{Mkt} and combining with \eqref{lamti}, we have
\begin{align}\label{contr}\sum_{i=1}^N\E \|X_t^{i,N}-\tilde{X}_t^{i,N}\|_\infty^2&\leq 2\e^{K_b r_0}\e^{-\e^{K_b r_0}\tilde{\Lambda} t}\sum_{i=1}^N\E\|X^{i,N}_0-\tilde{X}_0^{i,N}\|_\infty^2.
\end{align}
Recall that $\xi=(\xi_1,\xi_2,\cdots,\xi_N)\in(\C^d)^N$. By \eqref{s-s} and \eqref{MOM}, we obtain
\begin{align}\label{s-s1}
\left\|\sigma(\xi_i,\frac{1}{N}\sum_{i=1}^N\delta_{\xi_i})\right\|_{HS}^2\leq 2K_\sigma(\|\xi_i\|_\infty^2+\frac{1}{N}\sum_{i=1}^N\|\xi_i\|_\infty^2)+2\|\sigma(\mathbf{0},\delta_\mathbf{0})\|_{HS}^2,
\end{align}
and
\begin{align}\label{MOM1}
\nonumber&2\<b(\xi_i(0)),\xi_i(0)\>+2\left\<B(\xi_i,\frac{1}{N}\sum_{i=1}^N\delta_{\xi_i}),\xi_i(0)\right\> +\left\|\sigma(\xi_i,\frac{1}{N}\sum_{i=1}^N\delta_{\xi_i})\right\|_{HS}^2\\
\nonumber&\leq -K_b|\xi_i(0)|^2+(3K_B+2K_\sigma)\|\xi_i\|_\infty^2+(K_B+2K_\sigma)\frac{1}{N}\sum_{i=1}^N\|\xi_i\|_\infty^2\\
&\quad+2\<b(0)+B(\mathbf{0},\delta_\mathbf{0}),\xi(0)\>+2\|\sigma(\mathbf{0},\delta_\mathbf{0})\|_{HS}^2\\
\nonumber&\leq -K_b|\xi_i(0)|^2+(3K_B+2K_\sigma)\|\xi_i\|_\infty^2+(K_B+2K_\sigma)\frac{1}{N}\sum_{i=1}^N\|\xi_i\|_\infty^2\\
\nonumber&\quad+\frac{\Lambda}{2}|\xi(0)|^2+\frac{2}{\Lambda}|b(0)+B(\mathbf{0},\delta_\mathbf{0})|^2+2\|\sigma(\mathbf{0},\delta_\mathbf{0})\|_{HS}^2,
\end{align}
here $\mathbf{0}$ is the zero element in $\C^d$. Again by It\^{o}'s formula and \eqref{MOM1}, we obtain
\begin{align}\label{UNI1}
\nonumber&\d [\e^{K_bt}|X^{i,N}(t)|^2]\leq \e^{K_b t}(3K_B+2K_\sigma)\|X_t^{i,N}\|_\infty ^2\d t\\
&+\e^{K_b t}(K_B+2K_\sigma)\frac{1}{N}\sum_{i=1}^N\|X_t^{i,N}\|_\infty ^2\d t\\
\nonumber&+\e^{K_b t}\frac{\Lambda}{2}|X^{i,N}(t)|^2\d t+\e^{K_b t}\left\{\frac{2}{\Lambda}|b(0)+B(\mathbf{0},\delta_\mathbf{0})|^2+2\|\sigma(\mathbf{0},\delta_\mathbf{0})\|_{HS}^2\right\}\d t\\
\nonumber&+2\e^{K_b t}\left\<\sigma(X_t^{i,N},\frac{1}{N}\sum_{i=1}^N\delta_{X_t^{i,N}})\d W^i(t),X^{i,N}(t)\right\>.
\end{align}
Observing \eqref{vaS} and applying the same argument to derive \eqref{uni}, we conclude from \eqref{s-s1} and \eqref{UNI1} that there exists a constant $C>0$ such that
\begin{align}\label{uni1}
\sup_{t\geq 0}\E\left(1+\sum_{i=1}^N\|X_t^{i,N}\|_\infty^2\right)<C\left(1+\sum_{i=1}^N\E\|X_0^{i,N}\|_\infty^2\right).
\end{align}
With \eqref{contr} and \eqref{uni1} in hand, we conclude from \cite[Remark 2.1]{HRW} that for any $N\geq 1$, $(X_t^{1,N},X_t^{2,N},\cdots,X_t^{N,N})$ has a unique invariant probability measure $\bar{\mu}^N$ which satisfies
\begin{align}\label{expco}\W_2(\L_{(X_t^{1,N}X_t^{1,N},\cdots,X_t^{N,N})},\bar{\mu}^N)^2\leq \tilde{C}\e^{-\tilde{\lambda } t}\W_2(\L_{(X_0^{1,N}X_0^{1,N},\cdots,X_0^{N,N})},\bar{\mu}^N)^2,\ \ t\geq 0
\end{align}
for some constants $\tilde{C},\tilde{\lambda}>0$.

Next, as in the proof of Theorem \ref{POC30}, we simply denote $\mu_t=\L_{X_t^i}, 1\leq i\leq N$ and let $\P^0, \E^0$ be defined in Theorem \ref{POC30}. Firstly, consider
\begin{align*}\d \bar{X}^{i}(t)&= b(\bar{X}^{i}(t))\d t+\int_{\C^d}\tilde{B}(\bar{X}_t^{i}, y)\mu_t(\d y)\d t+  \sigma\d W^i(t),\ \ \bar{X}_0^{i}=X_0^{i,N}, 1\leq i\leq N.
\end{align*}
By \cite[Theorem 4.2.1]{Wbook} and the fact that $\{(X^i,\bar{X}^{i})\}_{1\leq i\leq N}$ are independent under $\P^0$, we conclude that for any $p>1$, $t>r_0$, there exists a constant $c(p,t)$ independent of $N$ such that
 \begin{align*}\nonumber\left(\E^{0} [F(\bar{X}_t^{1},\bar{X}_t^{2},\cdots,\bar{X}_t^{N})]\right)^p
&\leq \E^{0} [F(X_t^{1},X_t^{2},\cdots,X_t^{N})^p]\\
&\times\exp\left\{c(p,t)\sum_{i=1}^N\|X_0^{i,N}-X_0^{i}\|_\infty^2\right\}, \ \ F\in \scr B^+_b((\C^d)^N).
\end{align*}
Similar to \eqref{A4}, we can find a constant $c>0$ such that
\begin{align}\label{A5}
\nonumber&\sum_{i=1}^N\E\left|\frac{1}{N}\sum_{m=1}^N  \tilde{B}(X_t^{i,N},X_t^{m,N})-\int_{\C^d} \tilde{B}(X_t^{i,N},y)\mu_t(\d y)\right|^2\\
&\leq 20K_B^2\sum_{i=1}^N\E\|X^{i,N}_t-X^{i}_t\|_\infty^2+2c\E(1+\|X_t^{1}\|_\infty^2).
\end{align}
 Repeating the proof of \eqref{fie13}, we derive from \eqref{ENW}, \eqref{A5}, \eqref{S13n} and \eqref{uni} that for any $t>r_0$ and $p>1$, there exists a constant $c(p,t)>0$ such that
\begin{align}\label{enl}
\nonumber&\E \log F(X_{t}^{1,N},X_{t}^{2,N},\cdots, X_{t}^{N,N})\\
&\leq  \log \E [ F(X_{t}^{1},X_{t}^{2},\cdots,X_{t}^{N})]+c_0p\tilde{\W}_2(\L_{(X_{0}^{i,N})_{1\leq i\leq N}},\L_{(X_{0}^{i})_{1\leq i\leq N}})^2\\
\nonumber&+c_0p(1+\E\|X_0^1\|_\infty^2)t+ c(p,t)\tilde{\W}_2(\L_{(X_{0}^{i,N})_{1\leq i\leq N}},\L_{(X_{0}^{i})_{1\leq i\leq N}})^2, \ \ 0< F\in \B_b((\R^d)^N)
\end{align}
for some constant $c_0>0$.
Applying \eqref{S13n}, \eqref{uni}, \eqref{INP}, \eqref{enl}, \eqref{expco}, Theorem \ref{GRS} for $t_0=r_0+1$, we complete the proof.

\end{proof}
\subsection{The case with degenerate diffusion coefficients}
Let $m,d\in\mathbb{N}^{+}$. For any $x\in\R^{m+d}$, let $x^{(1)}\in\R^m$ and $x^{(2)}\in\R^d$ denote the first $m$ components and the last $d$ components of $x$ respectively.
In this section, we consider path-distribution dependent stochastic Hamiltonian system for $X(t)=(X^{(1)}(t),X^{(2)}(t))$:
\beq\label{EH}
\begin{cases}
\d X^{(1)}(t)=\{AX^{(1)}(t)+MX^{(2)}(t)\}\d t, \\
\d X^{(2)}(t)=\{b(X^{(2)}(t))+B(X_t,\L_{X_t})\}\d t+\sigma\d W(t),
\end{cases}
\end{equation}
where $W=(W(t))_{t\geq 0}$ is a $d$-dimensional standard Brownian motion with respect to a complete filtration probability space $(\OO, \F, \{\F_{t}\}_{t\ge 0}, \P)$, $A$ is an $m\times m$ matrix, $M$ is an $m\times d$ matrix, $\sigma$ is a $d\times d$ matrix, $b:\mathbb{R}^{d}\to \mathbb{R}^d$ and $B:\C^{m+d}\times \scr P(\C^{m+d})\to\mathbb{R}^d$ are measurable. Let $\{W^i_t\}_{i\geq 1}$ be independent $d$-dimensional Brownian motions. For any $1\leq i\leq N<\infty$, let $X^i$ solves \eqref{EH} with $W_t^i$ replacing $W_t$ and i.i.d. $\F_0$-measurable and $\C^{m+d}$-valued initial values $(X_0^i)_{1\leq i\leq N}$ and let $\{X^{i,N}\}_{1\leq i\leq N<\infty}$ be the mean field interacting particle system associated to \eqref{EH} with exchangeable initial value $\{X_0^{i,N}\}_{1\leq i\leq N<\infty}$.

To obtain the long time propagation of chaos, we make the following assumptions:
\begin{enumerate}
\item[\bf{(A1)}] There exist $K_1,K_2\geq 0$ such that
\begin{equation*}
2\<b(y)-b(\tilde{y}),y-\tilde{y}\>\leq -K_1|y-\tilde{y}|^2,\ \ |b(y)-b(\tilde{y})|\leq K_2|y-\tilde{y}|,\ \ y,\tilde{y}\in\R^d.
\end{equation*}
\item[\bf{(A2)}] There exists $K_A\geq 0$ such that
\begin{equation*}
2\<Ax-A\tilde{x},x-\tilde{x}\>\leq -K_A|x-\tilde{x}|^2,\ \ x,\tilde{x}\in\R^m.
\end{equation*}
\item[\bf{(A3)}] $B(\xi,\mu)=\int_{\C^{m+d}}\tilde{B}(\xi,\eta)\mu(\d \eta)$. There exists a constant $K_B>0$ such that
$$|\tilde{B}(\xi,\eta)-\tilde{B}(\tilde{\xi},\tilde{\eta})|\leq K_B(\|\xi-\tilde{\xi}\|_\infty+\|\eta-\tilde{\eta}\|_\infty),\ \ \xi,\tilde{\xi},\eta,\tilde{\eta}\in\C^{m+d}.$$
\item[\bf{(A4)}] $\sigma\sigma^\ast$ is invertible and there exists an integer $l$ with $0\leq l\leq m-1$ such that
$$\mathrm{Rank}[M,AM,\cdots,A^lM]=m.$$
\end{enumerate}
\begin{thm}\label{DEG}
Assume {\bf(A1)}-{\bf(A4)} and $\L_{X_0^{1,N}},\L_{X_0^{1}}\in \scr P_2(\C^{m+d})$. Let
$$\Lambda=\frac{1}{2}\left\{\sup_{v\in[0,K_1\wedge K_A]}v\e^{-v r_0}-(4K_B+2\|M\|)\right\}.$$
Assume $\Lambda>0$, i.e.
\begin{align}\label{vaS11}4K_B+2\|M\|<\sup_{v\in[0,K_1\wedge K_A]}v\e^{-v r_0}.
\end{align}
Then the assertions in Theorem \ref{PDP} hold.
\end{thm}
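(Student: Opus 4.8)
The plan is to reduce everything to the general Theorem \ref{GRS} with $t_0=r_0+1$, exactly as in the proof of Theorem \ref{PDP}(2). Thus I must verify, for the present degenerate system \eqref{EH}, the two inequalities in \eqref{EWE} together with the moment control \eqref{uem}: concretely, (a) a uniform-in-time $L^2$-Wasserstein propagation of chaos with exponential rate $r(t)=c\,\e^{-\e^{(K_1\wedge K_A)r_0}\Lambda t}$; (b) a finite-time entropy-cost estimate of the type \eqref{enl}; (c) uniform-in-time second moment bounds; and (d) existence and uniqueness of the invariant measures $\mu^\ast$ of \eqref{EH} and $\bar\mu^N$ of the interacting system, each with exponential contraction. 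Granting (a)--(d), the three displayed conclusions follow by feeding these into Theorem \ref{GRS} and invoking Pinsker's inequality, as in the final paragraph of the proof of Theorem \ref{PDP}.

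For the Wasserstein contraction I would couple two copies $X=(X^{(1)},X^{(2)})$ and $\tilde X=(\tilde X^{(1)},\tilde X^{(2)})$ driven by the same Brownian motion, so that the (constant) noise cancels in the difference and the difference equation carries no martingale term. Writing $u=X^{(1)}-\tilde X^{(1)}$ and $w=X^{(2)}-\tilde X^{(2)}$, It\^o's formula together with {\bf(A2)} gives $\d|u|^2\leq -K_A|u|^2\d t+2\<Mw,u\>\d t$, while {\bf(A1)} and {\bf(A3)} give $\d|w|^2\leq -K_1|w|^2\d t+(\text{interaction})\d t$. Controlling the cross term $2\<Mw,u\>$ by Young's inequality together with the Lipschitz interaction produces precisely the threshold $4K_B+2\|M\|$ appearing in \eqref{vaS11}, so that the effective dissipation rate is positive. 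The path memory is handled exactly as in \eqref{Itf}--\eqref{RES}: one introduces the weight $\e^{vs^+}$, applies Gronwall's inequality, uses $\eta_t\geq\sum_i\e^{v(t-r_0)}\|X_t^{i,N}-X_t^i\|_\infty^2$, and optimizes over $v\in[0,K_1\wedge K_A]$ to obtain the rate governed by $\Lambda$. The law-of-large-numbers remainder coming from the empirical interaction is absorbed via Lemma \ref{CTY} and the uniform moment bound, yielding the analogue of \eqref{S13n}. Running the same computation for two copies of the particle system gives the analogue of \eqref{contr}, whence \cite[Remark 2.1]{HRW} furnishes $\bar\mu^N$ and $\mu^\ast$ together with the exponential estimates \eqref{expco} and \eqref{INP}. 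The uniform moment bounds (c) are obtained by the same It\^o argument applied to $|X^i(t)|^2$, as in \eqref{uni}/\eqref{uni1}.

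The main obstacle is ingredient (b), because here the noise acts only on the velocity block $X^{(2)}$ whereas $X^{(1)}$ evolves deterministically. To run the coupling by change of measure of Theorem \ref{POC10}(3) I need a dimension-free Harnack inequality with power $p$ for the reference system obtained by freezing the law $\mu_t=\L_{X_t^i}$, and this is exactly where the Kalman rank condition {\bf(A4)}, $\Rank[M,AM,\cdots,A^lM]=m$, enters: it makes the pair $(A,M)$ controllable, so that the noise injected into $X^{(2)}$ propagates to $X^{(1)}$ through the iterated brackets, and a change of measure can be designed to steer \emph{both} blocks from the initial value $X_0^i$ to $X_0^{i,N}$. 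This yields the Harnack inequality with power, equivalently the $\frac{p}{p-1}$-integrability estimate of the shifted density (the analogue of \eqref{PES}), for every $t>r_0$ (the delay $r_0$ only serving to clear the path memory), in the spirit of \cite[Theorem 4.2.1]{Wbook} for stochastic Hamiltonian systems. With this in hand, steps {\bf(step (1))}--{\bf(step (3))} of Theorem \ref{POC10}(3) carry over unchanged: Girsanov (after the bounded truncation of $\tilde B$) gives the analogue of \eqref{DDT}, the entropy inequality \cite[Lemma 2.1]{23RW} merges the two contributions, and one reaches the analogue of \eqref{ENW}; estimating the interaction remainder as in \eqref{A5} via {\bf(A3)} and Lemma \ref{CTY} then delivers \eqref{enl} with constants independent of $N$.

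Finally I would assemble the pieces: with the Wasserstein contraction (a), the entropy-cost bound \eqref{enl}, the moment bound (c) and the invariant-measure estimates \eqref{expco}, \eqref{INP} all verified, the hypotheses \eqref{EWE}--\eqref{uem} of Theorem \ref{GRS} hold with $t_0=r_0+1$ and $r(t)=c\,\e^{-\e^{(K_1\wedge K_A)r_0}\Lambda t}$; Theorem \ref{GRS} together with Pinsker's inequality then yields all three conclusions asserted for Theorem \ref{PDP}. I expect the only genuinely new input beyond Theorem \ref{PDP} to be the degenerate Harnack inequality, whose validity rests entirely on {\bf(A4)}; the dissipative and delay bookkeeping is a direct transcription of \eqref{Itf}--\eqref{RES} with $K_b$ replaced by $K_1\wedge K_A$ and the extra cross coupling $2\|M\|$ absorbed into the smallness condition \eqref{vaS11}.
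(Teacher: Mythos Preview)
Your proposal is correct and follows essentially the same route as the paper's own proof, which establishes the dissipative inequality \eqref{DIS}--\eqref{MON}, invokes a Harnack inequality with power for the frozen-law degenerate system, and then declares that the proof of Theorem \ref{PDP} can be repeated verbatim. The only discrepancy is a citation: for the path-dependent Hamiltonian Harnack inequality the paper appeals to \cite[Lemma 4.1]{BWY} (with power $p=2$) rather than \cite[Theorem 4.2.1]{Wbook}, but the role this plays in the argument is exactly what you describe.
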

\begin{proof} By \eqref{BLI}, {\bf(A1)}-{\bf(A3)}, for any $\xi=(\xi^{(1)},\xi^{(2)}), \bar{\xi}=(\bar{\xi}^{(1)},\bar{\xi}^{(2)})\in \C^{m+d}$ and $\gamma,\bar{\gamma}\in \scr P_{2}(\C^{m+d})$,
\begin{align}\label{DIS}\nonumber&2\<A(\xi^{(1)}(0)-\bar{\xi}^{(1)}(0))+M(\xi^{(2)}(0)-\bar{\xi}^{(2)}(0)),\ \ \xi^{(1)}(0)-\bar{\xi}^{(1)}(0)\> \\
&+2\<b(\xi^{(2)}(0))-b(\bar{\xi}^{(2)}(0))+B(\xi,\gamma)-B(\bar{\xi}, \bar{\gamma}),\ \ \xi^{(2)}(0)-\bar{\xi}^{(2)}(0)\>\\
\nonumber&\leq -(K_1\wedge K_A)|\xi(0)-\bar{\xi}(0)|^2+(3K_B+2\|M\|)\|\xi-\bar{\xi}\|_\infty^2+K_B\W_2(\gamma,\bar{\gamma})^2,
\end{align}
which yields
\begin{align}\label{MON}
\nonumber&2\<A\xi^{(1)}(0)+M\xi^{(2)}(0),\ \ \xi^{(1)}(0)\>+2\<b(\xi^{(2)}(0))+B(\xi,\gamma), \xi^{(2)}(0)\>+\|\sigma\|_{HS}^2\\
&\leq -(K_1\wedge K_A)|\xi(0)|^2+(3K_B+2\|M\|)\|\xi\|_\infty^2+K_B\gamma(\|\cdot\|_\infty^2)\\
\nonumber&+\frac{2}{\Lambda}|b(0)+B(\mathbf{0},\delta_\mathbf{0})|^2+\frac{\Lambda}{2}|\xi(0)|^2+\|\sigma\|_{HS}^2.
\end{align}
Again simply denote $\mu_t=\L_{X_t^i}, 1\leq i\leq N$. Let $\{\bar{X}^i\}_{1\leq i\leq N}$ solve
\begin{equation*}
\begin{cases}
\d X^{(1)}(t)=\{AX^{(1)}(t)+MX^{(2)}(t)\}\d t, \\
\d X^{(2)}(t)=\{b(X^{(2)}(t))+B(X_t,\mu_t)\}\d t+\sigma\d W^i(t),\ \ 1\leq i\leq N
\end{cases}
\end{equation*}
with initial value $\{X_0^{i,N}\}_{1\leq i\leq N}$. Thanks to \cite[Lemma 4.1]{BWY} and the fact that $\{(X^i,\bar{X}^{i})\}_{1\leq i\leq N}$ are independent under $\P^0$, under {\bf(A1)}-{\bf(A4)}, for any $t>r_0$, there exists a constant $c(t)>0$ independent of $N$ such that for any $F\in \scr B^+_b((\C^{m+d})^N)$,
\begin{align*}\left(\E^{0} [F(\bar{X}_t^{1},\bar{X}_t^{2},\cdots,\bar{X}_t^{N})]\right)^2
&\leq \E^{0} [F(X_t^{1},X_t^{2},\cdots,X_t^{N})^2]\exp\left\{c(t)\sum_{i=1}^N\|X_0^{i,N}-X_0^{i}\|_\infty^2\right\}.
\end{align*}
Combining this with \eqref{vaS11}-\eqref{MON}, we can
repeat the proof of Theorem \ref{PDP} to derive the assertions in Theorem \ref{PDP}
and we complete the proof.
\end{proof}

\end{document}